\newcommand{\vect}[1]{\ensuremath\boldsymbol{#1}}
\newcommand{\td}[2]{\frac{d#1}{d#2}}
\newcommand{\pd}[2]{\dfrac{\partial#1}{\partial#2}}
\newcommand{\nor}[1]{\left\| #1 \right\|}
\newcommand{\LRp}[1]{\left( #1 \right)}
\newcommand{\LRs}[1]{\left[ #1 \right]}
\newcommand{\LRb}[1]{\left| #1 \right|}
\newcommand{\LRc}[1]{\left\{ #1 \right\}}
\newcommand{\Grad} {\ensuremath{\nabla}}
\newcommand{\Div} {\ensuremath{\nabla\cdot}}
\newcommand{\jump}[1] {\ensuremath{\LRs{\![#1]\!}}}
\newcommand{\avg}[1] {\ensuremath{\LRc{\![#1]\!}}}
\newtheorem{lemma}[theorem]{Lemma}
\newtheorem{corollary}[theorem]{Corollary}
\newcommand{\eval}[2][\right]{\relax
  \ifx#1\right\relax \left.\fi#2#1\rvert}
\newcommand{\refpyr}{\widehat{\mathcal{P}}}
\newcommand{\pyr}{{\mathcal{P}}}
\newcolumntype{C}[1]{>{\centering\let\newline\\\arraybackslash\hspace{0pt}}m{#1}}
\newcommand*\diff[1]{\mathop{}\!{\mathrm{d}#1}}
\renewcommand\d[1]{\mspace{6mu}\mathrm{d}#1\@ifnextchar\d{\mspace{-3mu}}{}}
\author{Jesse Chan\thanks{Department of Computational and Applied Mathematics, Rice University, 6100 Main St, Houston, TX, 77005} \and T. Warburton\footnotemark[1]}
\title{Orthogonal bases for vertex-mapped pyramids}
\begin{document}

\maketitle
\begin{abstract}
Discontinuous Galerkin (DG) methods discretized under the method of lines must handle the inverse of a block diagonal mass matrix at each time step.  Efficient implementations of the DG method hinge upon inexpensive and low-memory techniques for the inversion of each dense mass matrix block.  We propose an efficient time-explicit DG method on meshes of pyramidal elements based on the construction of a semi-nodal high order basis, which is orthogonal for a class of transformations of the reference pyramid, despite the non-affine nature of the mapping.   We give numerical results confirming both expected convergence rates and discuss efficiency of DG methods under such a basis.  
\end{abstract}

\section{Introduction}


Mesh generation has not yet matured to the point where hexahedra-only meshes can be constructed for complex geometries.  Despite this limitation, hexahedral elements remain popular, offering significant benefits over triangular and tetrahedral elements in high order finite element methods.  For example, exploitation of the tensor-product structure allows for both simple constructions of basis functions and cubature rules, as well as fast, low-memory applications of high order operators.  An alternative to purely hexahedral meshes are hex-dominant meshes \cite{baudouin2014frontal,schoberl2012netgen}, which contain primarily hexahedral elements but also a small number of tetrahedral, wedge (prism) and pyramid elements, where wedge and pyramid elements are used as ``glue'' elements to facilitate connections between hexahedral and tetrahedral elements \cite{Demkowicz:2007:CHF:1564840, bergot2010higher, gassner2009polymorphic, karniadakis1999spectral}.  

Finite elements for the pyramid have been available since the early 1990s \cite{bedrosian1992shape, zaglmayr}, though a rigorous construction of high order bases for the pyramid has been a more recent development.  Nigam and Phillips constructed conforming exact sequence finite element spaces in \cite{nigam2012high, nigam2012numerical}, and Bergot, Cohen, and Durufle gave explicit orthogonal bases on the pyramid \cite{bergot2013higher, bergot2010higher}.  Both groups showed that, in addition to polynomials, the approximation spaces on the pyramid must contain rational functions in order for the trace spaces on the faces of the pyramid to remain polynomial, which is necessary for conformity of the global finite element space.  

\subsection{Techniques for efficient mass matrix inversion}

In \cite{klockner2009nodal}, it was shown that the computational structure of DG methods makes them well-suited for accelerators such as graphics processing units (GPUs).  Under time-explicit DG methods, a block diagonal mass matrix inverse is accounted for at each timestep.  A key observation for straight-edged simplicial elements is that each block of the mass matrix is a constant scaling of the mass matrix over a reference simplex.  As a result, it is possible to sidestep the inversion of the full mass matrix by using derivative and lift operators which are premultiplied by the inverse of the reference mass matrix and applying local scalings.  This sidesteps the storage and inversion of individual mass matrices over each element, which, due to the limited memory and reduced efficiency of general linear algebra routines on GPUs, is not expected to perform well on such accelerators.

Finite element methods typically define coordinate mappings from a reference to physical element using basis functions on the pyramid.  Entries of the mass matrix are then computed on the reference element using a change of variables factor.  For affine mappings of the reference simplex, this factor is constant, implying that only one mass matrix needs to be stored and inverted for all such simplices.  For trilinear mapped tensor product hexahedral elements, this factor is no longer constant, but it is possible to decompose the mapped mass matrix into the Kronecker product of 1D mass matrices such that this factor is constant in each tensor product direction.  An alternative procedure is to employ Lagrange polynomials at Gauss-Legendre-Lobatto (GLL) quadrature points and construct the lumped mass matrix through inexact numerical integration.  This yields the Spectral Element Method (SEM), which boasts a trivially invertible diagonal mass matrix whose entries are the GLL quadrature weights.  

Bedrosian introduced in \cite{bedrosian1992shape} low order vertex shape functions for the pyramid which are rational in nature.  Using such shape functions, transformations of the reference pyramid could be defined in terms of vertex positions of the physical pyramid.  We consider in this work physical pyramids which are images of the reference pyramid under such a map, and refer to these as \emph{vertex-mapped} pyramids, which are analogous to affine mappings of the simplex and trilinear mappings of the hexahedra.  

For vertex-mapped pyramids, however, we do not observe the advantages of either simplicial or hexahedral elements.  The construction of lumped mass matrices and GLL quadratures for non-hexahedral elements is nontrivial \cite{mulder2013new}, and the tensor product structure is absent for the pyramid.  An analogue to GLL points may not even exist for non-hexahedral domains --- Helenbrook showed that, on triangles, there does not exist a Lobatto-type quadrature rule which is both exact for polynomials of order $2N-1$ and has a number of points equal to the dimension of order $N$ polynomials \cite{helenbrook2009existence}.   Furthermore, in addition to the fact that a non-planar pyramid base produces a non-affine mapping, it was shown in \cite{bergot2010higher} that for non-parallelogram pyramid bases, the mapping is not only non-affine, but rational in the $r,s,t$ coordinates.  

Attempts to rectify the costs and complications of non-affine mapped elements have also been proposed previously in the context of curvilinear meshes.  Several methods have experimented with modifying the numerical method or formulation in order to sidestep difficulties in dealing with curvilinear and non-affine transformations.  For example, Krivodonova and Berger \cite{krivodonova2006high} extrapolate boundary conditions on curvilinear boundaries to the boundary of a mesh consisting of affine-mapped triangles.  However, while this technique allows for the efficient inversion of mass matrices on simplices where the determinant of the Jacobian is constant, it does not circumvent the presence of non-affine mappings for pyramids.  

Warburton proposed in \cite{warburton2010low, warburton2013low} a Low-Storage Curvilinear discontinuous Galerkin method (LSC-DG), where the local basis functions on each element are taken to be the reference element basis functions divided by the square root of the change of variables factor over that element.  As a result, the mass matrix is identical to the reference element mass matrix for all elements, independent of the local mapping.  The analysis in \cite{warburton2013low} includes a convergence analysis with sufficient conditions requiring elements to be asymptotically affine to attain design order convergence.  These conditions do not hold for general vertex-mapped pyramids, and it was observed in \cite{bergot2013higher} that the LSC-DG error stagnated under refinement of pyramidal meshes .


We present here an alternative low-memory DG method by constructing a basis which yields a diagonal mass matrix for arbitrary vertex-mapped pyramids, but spans the same space as the optimal pyramid spaces described in \cite{bergot2010higher} and \cite{nigam2012high}.  The resulting DG method on vertex-mapped pyramidal meshes provides both efficient inversion of the mass matrix and optimal rates of convergence for high order approximation spaces.  Numerical results confirm the accuracy and efficiency of this basis compared to LSC-DG and matrix-free alternatives, and computational experiments are performed to assess the performance of DG on GPUs.  

\section{High order finite elements on the pyramid}

We introduce the bi-unit right pyramid $\refpyr$ with coordinates $r,s,t$ such that
\[
r,s \in [-1, -t], \quad t\in [-1,1].  
\]
We also define the Duffy-type mapping from the bi-unit cube to the bi-unit right pyramid with coordinates $a,b,c \in [-1,1]$
\[
r = (1+a)\LRp{\frac{1-c}{2}}-1, \quad s = (1+b)\LRp{\frac{1-c}{2}}-1,\quad t= c.
\]
which has a change of variables factor of $\LRp{\LRp{1-c}/{2}}^2$.  The inverse transform is given by 
\[
a = \frac{2(1+r)}{1-t}-1, \quad b =  \frac{2(1+s)}{1-t}-1,\quad c= t.
\]
Quadrature rules for the pyramid may also be constructed by defining a quadrature rule on the bi-unit cube and applying the transform to the reference element.  

\begin{figure}
\centering
\label{fig:mapped_pyr}
\subfigure{\includegraphics[width=.35\textwidth]{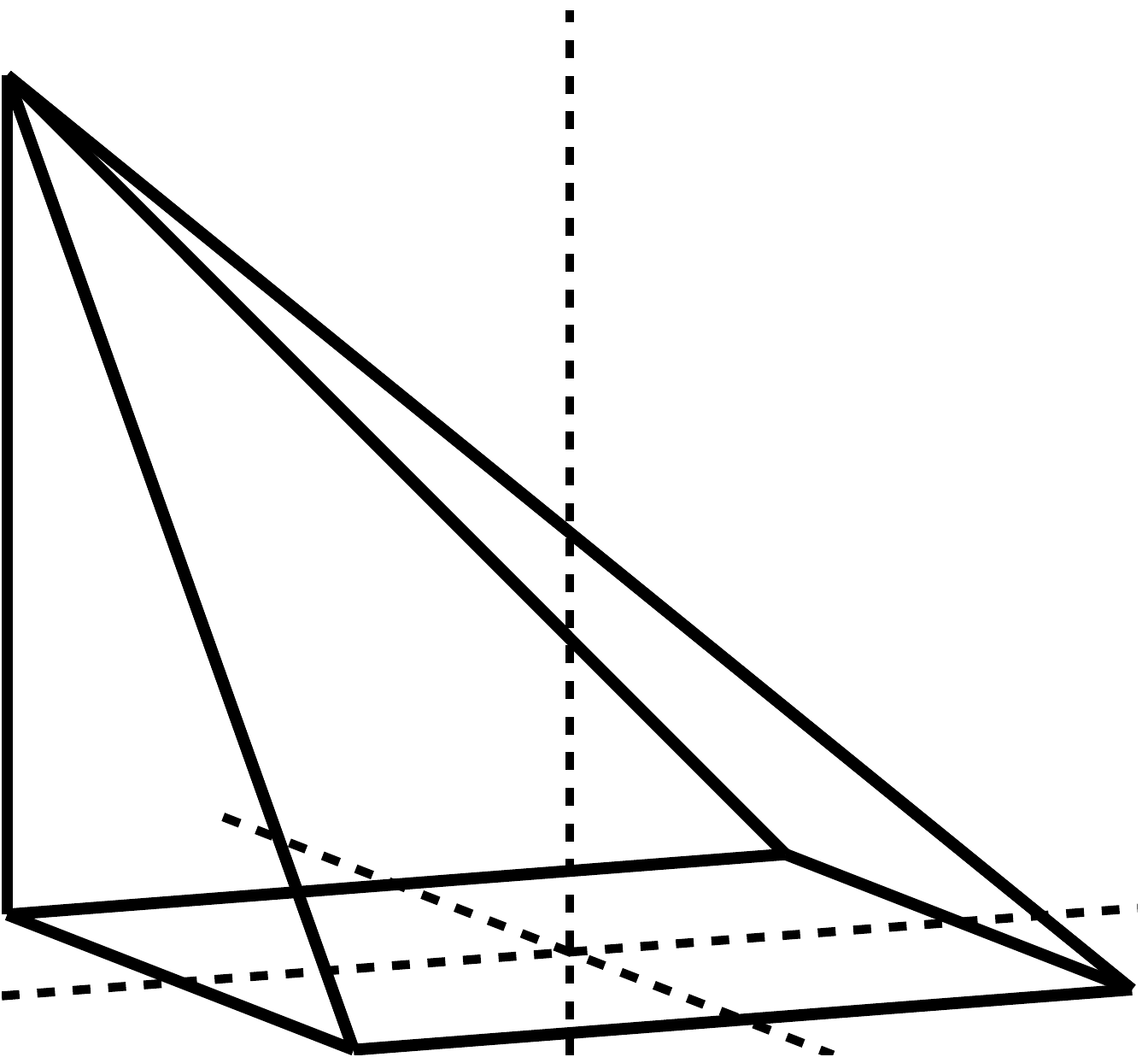}}
\hspace{2cm}
\subfigure{\includegraphics[width=.25\textwidth]{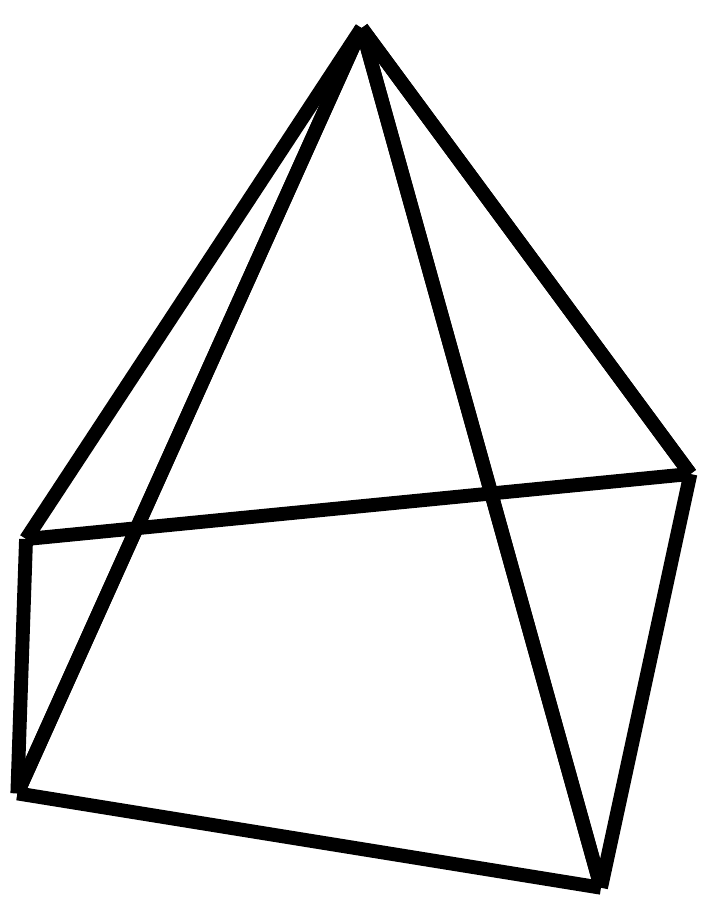}}
\caption{The reference bi-unit right pyramid (left) and an example of a vertex-mapped pyramid (right).}
\end{figure}

The vertex functions of Bedrosian \cite{bedrosian1992shape} are defined as follows on the bi-unit right pyramid:
\begin{align*}
v_1(r,s,t) &= \frac{(r+t)(s+t)}{2(1-t)}, \quad v_2(r,s,t)= -\frac{(r+t)(s+1)}{2(1-t)}, \quad v_3(r,s,t) = -\frac{(r+t)(s+t)}{2(1-t)},\\ 
v_4(r,s,t) &= \frac{(r+1)(s+1)}{2(1-t)}, \quad v_5(r,s,t) = \frac{1+t}{2}.
\end{align*}
The mapping $(x,y,z) = \vect{F}(r,s,t)$ from the reference pyramid $\refpyr$ to the physical vertex-mapped pyramid $\pyr$ is then given by 
\[
\vect{F}(r,s,t) = \sum_{i=1}^5 \vect{V}_i v_i(r,s,t),
\]
where $\vect{V}_i$ is the coordinate of the $i$th vertex of the physical pyramid.  We also define $J$, the determinant of the Jacobian of $\vect{F}$, such that 
\[
J = \LRb{\begin{array}{ccc}
\pd{\vect{F}_x}{r} & \pd{\vect{F}_x}{s} & \pd{\vect{F}_x}{t}\\ 
\vspace{-.6em}\\
\pd{\vect{F}_y}{r} & \pd{\vect{F}_y}{s} & \pd{\vect{F}_y}{t}\\
\vspace{-.6em}\\
\pd{\vect{F}_z}{r} & \pd{\vect{F}_z}{s} & \pd{\vect{F}_z}{t}
\end{array}
},\qquad \int_{\pyr} u\diff x \diff y \diff z = \int_{\refpyr} u J \diff r \diff s \diff t.
\]

Bergot, Cohen, and Durufle \cite{bergot2010higher} defined an orthonormal basis on the bi-unit right pyramid as follows: let $P_i^{\alpha,\beta}$ be the Jacobi polynomial with weights $\alpha,\beta$.  Then, define $\psi_{ijk}$
\begin{align}
\label{eqn:rationalbasis}
\psi_{ijk}(a,b,c) &= \sqrt{2^{2\mu_{ij}+2}}P_i^{0,0}(a)P_i^{0,0}(b) \LRp{\frac{1-c}{2}}^{\mu_{ij}} P_k^{2\mu_{ij} + 2}(c),
\end{align}
where $\mu_{ij} = \max(i,j)$ and $k\leq N-\mu_{ij}$.  Under the Duffy-type mapping from $(a,b,c)$ to $(r,s,t)$ coordinates, these $\psi_{ijk}$ form an orthonormal basis over the reference bi-unit right pyramid.  

The elements of the mass matrix for the mapped pyramid $\pyr$ are defined as
\[
M_{ijk,i'j'k'} = \int_{\refpyr} \psi_{ijk} \psi_{i'j'k'} J \diff x = \int_{-1}^1\int_{-1}^1\int_{-1}^1 \psi_{ijk}\psi_{i',j',k'} \LRp{\frac{1-c}{2}}^2 J \diff a \diff b \diff c.
\]
For the orthonormal basis of Bergot, Cohen, and Durufle, the mass matrix is no longer diagonal under a non-affine mapping, and inversion of the mass matrix must be done individually over every element.  However, while it is difficult to define an orthonormal basis for an arbitrary non-affine map, it is possible to derive an orthogonal basis for a vertex-mapped transformation of the reference pyramid.  

\subsection{An orthonormal semi-nodal basis on the mapped element}

We first restate Lemma 3.5 of Bergot, Cohen, and Durufle \cite{bergot2010higher}, which gives that the determinant of the Jacobian $J$ is bilinear when mapped under the inverse Duffy-type transform to the bi-unit cube.  
\begin{lemma}
\label{lemma:Jab}
Let ${Q}_{N_a,N_b,N_c}$ be the space of polynomials of individual orders $N_a$, $N_b$, and $N_c$ in the $a,b,c$ coordinates on the bi-unit cube, and let $J$ be the determinant of the Jacobian mapping.  Then, $J \in Q_{1,1,0}$ for vertex-mapped transformations of the pyramid.  
\end{lemma}
We may use this fact, along with the fact that the $N+1$ point Gauss-Legendre quadrature rule integrates exactly polynomials of degree $2N+1$, to construct an orthonormal basis for the vertex-mapped pyramid (we will refer to this as the \textit{semi-nodal} basis).  To begin, we first show a property of Jacobi polynomials with varying weights.
\begin{lemma}
\label{lemma:cbasis}
For $i\neq j$,
\[
\int_{-1}^1 \LRp{\frac{1-c}{2}}^{2+(N - i)+(N-j)} P_i^{2N + 3 - 2i,0}(c) P_j^{2N + 3 - 2j,0}(c)\diff c = C^N_i\delta_{ij},
\]
where 
\[
C^N_i = \frac{N+2}{2^{2(N+1-i)}(2N+3-2i)}.
\]
\end{lemma}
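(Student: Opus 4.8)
The plan is to treat the two assertions separately: the vanishing of the integral for $i \neq j$, and the evaluation of the constant when $i = j$. The unifying observation is to compare the exponent $2+(N-i)+(N-j)$ of $\LRp{\frac{1-c}{2}}$ against the ``natural'' Jacobi weights $2N+3-2i$ and $2N+3-2j$ attached to the two factors $P_i^{2N+3-2i,0}$ and $P_j^{2N+3-2j,0}$, and then to exploit the orthogonality of the (orthonormal) Jacobi polynomials with respect to those weights.

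For the off-diagonal case, the integrand is symmetric in $i$ and $j$, so I may assume $i < j$. I would then split
\[
\LRp{\frac{1-c}{2}}^{2+(N-i)+(N-j)} = \LRp{\frac{1-c}{2}}^{2N+3-2j}\LRp{\frac{1-c}{2}}^{j-i-1},
\]
where the surplus exponent $j-i-1 \geq 0$ precisely because $i < j$. Absorbing this extra factor into $P_i^{2N+3-2i,0}(c)$ produces a polynomial of degree $(j-i-1)+i = j-1 < j$, while the remaining weight $\LRp{\frac{1-c}{2}}^{2N+3-2j}$ is exactly the orthogonality weight (up to the harmless constant $2^{-(2N+3-2j)}$) for $P_j^{2N+3-2j,0}$. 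Since a Jacobi polynomial of degree $j$ is orthogonal to every polynomial of lower degree against its own weight, the integral vanishes. The only point requiring care is the nonnegativity of the surplus exponent, which is why the reduction to $i<j$ matters.

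For the diagonal case I set $\alpha = 2N+3-2i$ and $n = i$, so that the exponent becomes $2+2(N-i) = \alpha-1$, one power short of the natural weight $\alpha$. The crux is the (at first surprising) identity, for the classical unnormalized Jacobi polynomials,
\[
\int_{-1}^1 (1-c)^{\alpha-1}\LRs{P_n^{\alpha,0}(c)}^2 \diff c = \frac{2^\alpha}{\alpha},
\]
which is, strikingly, independent of $n$. I would prove it by integration by parts, writing $(1-c)^{\alpha-1} = \frac{d}{dc}\LRs{-\frac{(1-c)^\alpha}{\alpha}}$: the boundary term contributes $\frac{2^\alpha}{\alpha}\LRs{P_n^{\alpha,0}(-1)}^2 = \frac{2^\alpha}{\alpha}$ (using $P_n^{\alpha,0}(-1) = (-1)^n$ and the vanishing of $(1-c)^\alpha$ at $c=1$, which needs $\alpha>0$, satisfied since $i \leq N$ forces $\alpha \geq 3$), while the remaining integral $\frac{2}{\alpha}\int_{-1}^1 (1-c)^\alpha P_n^{\alpha,0}(P_n^{\alpha,0})' \diff c$ vanishes because $(P_n^{\alpha,0})'$ has degree $n-1$ and $P_n^{\alpha,0}$ is orthogonal to all lower-degree polynomials against $(1-c)^\alpha$.

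Finally I would restore the normalization. Because the $P^{\alpha,\beta}$ are taken $L^2$-orthonormal (consistent with the orthonormality of the $\psi_{ijk}$), the diagonal value equals $\frac{2^\alpha/\alpha}{2^{\alpha-1}h_n^{\alpha,0}}$, where the factor $2^{\alpha-1}$ converts $(1-c)^{\alpha-1}$ into $\LRp{\frac{1-c}{2}}^{\alpha-1}$ and $h_n^{\alpha,0} = 2^{\alpha+1}/(2n+\alpha+1)$ is the standard Jacobi normalization; this simplifies to $\frac{2n+\alpha+1}{2^\alpha\alpha}$. Substituting $\alpha = 2N+3-2i$ and $n=i$ gives $2n+\alpha+1 = 2(N+2)$ and collapses the expression to $\frac{N+2}{2^{2(N+1-i)}(2N+3-2i)} = C_i^N$. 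The main obstacle is recognizing the $n$-independence of the diagonal integral and finding the integration-by-parts identity that exposes it; once that is in hand, both the off-diagonal vanishing and the constant-chasing are routine.
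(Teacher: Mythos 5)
Your proposal is correct, and on the off-diagonal part it follows essentially the same route as the paper: reduce by symmetry to one ordering of $i,j$, peel the weight into the natural Jacobi weight for the higher-degree polynomial times a nonnegative surplus power of $(1-c)/2$, absorb that surplus into the lower-degree polynomial to get something of strictly smaller degree, and invoke weighted orthogonality. (The paper takes $j<i$ and absorbs into $P_j$; you take $i<j$ and absorb into $P_i$ --- identical up to relabeling, and your exponent bookkeeping $\LRp{2N+3-2j}+(j-i-1)=2+(N-i)+(N-j)$ is actually cleaner than the paper's, whose displayed computation contains exponent typos.) Where you genuinely add content is the diagonal case: the paper simply asserts the value of $C^N_i$ with no derivation, whereas you derive it from the identity
\[
\int_{-1}^1 (1-c)^{\alpha-1}\LRs{P_n^{\alpha,0}(c)}^2 \diff c = \frac{2^{\alpha}}{\alpha},
\]
proved by integrating by parts against $(1-c)^{\alpha-1} = -\frac{d}{dc}\LRs{(1-c)^{\alpha}/\alpha}$, using $P_n^{\alpha,0}(-1)=(-1)^n$ and the orthogonality of $P_n^{\alpha,0}$ to $(P_n^{\alpha,0})'$ under the weight $(1-c)^{\alpha}$; the $n$-independence of this quantity is the nontrivial observation. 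Your normalization step is also right, and it makes explicit something the paper leaves implicit: the constant $C^N_i$ corresponds to Jacobi polynomials orthonormalized with respect to their own weight (the convention forced by orthonormality of the $\psi_{ijk}$), and with $h_n^{\alpha,0}=2^{\alpha+1}/(2n+\alpha+1)$, $\alpha=2N+3-2i$, $n=i$, one gets $2n+\alpha+1=2(N+2)$ and hence exactly $C^N_i = \frac{N+2}{2^{2(N+1-i)}(2N+3-2i)}$. In short: same mechanism for the vanishing, plus a self-contained evaluation of the diagonal constant that the paper omits.
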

\begin{proof}
Assume without loss of generality that $j < i$ and that $N, i>0$ (since $P_0$ is trivially determined for any choice of $N$).  The statement of the Lemma is then equivalent to showing
\[
\int_{-1}^1 \LRp{\frac{1-c}{2}}^{2+(N - i)+(N-j)} P_i^{2N + 3 - 2i,0}(c) p_j(c)\diff c = 0
\]
for any polynomial $p_j(c)$ of degree $j$.  Since $j < i$, we may take 
\[
p_j(c) = \LRp{\frac{1-c}{2}}^j, \qquad j = i - 1 - k
\]
for $i > k > 0$.  Then, 
\begin{align*}
&\int_{-1}^1 \LRp{\frac{1-c}{2}}^{2+2(N - i)+(N-j)} P_i^{2N + 3 - 2i,0}(c) \LRp{\frac{1-c}{2}}^j\diff c \\
&= \int_{-1}^1 \LRp{\frac{1-c}{2}}^{2N + 3- 2i} P_i^{2N + 3 - 2i,0}(c) \LRp{\frac{1-c}{2}}^k \diff c = 0,
\end{align*}
due to the weighted orthogonality of Jacobi polynomials to polynomials of lower order.  Finally, when $i=j$, we may compute
\[
C^N_i = \int_{-1}^1 \LRp{\frac{1-c}{2}}^{2 + 2N - 2i} \LRp{P^{2N+3-2i,0}_i(c)}^2\diff c = \frac{N+2}{2^{2(N+1-i)}(2N+3-2i)}.
\]
\end{proof}
A similar property was also exploited by Beuchler and Sch{\"o}berl in \cite{beuchler2006new} to construct basis functions for the triangle with sparse stiffness matrices.  
These polynomials are shown in Figure~\ref{fig:cbasis} for $N=3$.  
\begin{figure}
\centering
\label{fig:cbasis}
\includegraphics[width=.45\textwidth]{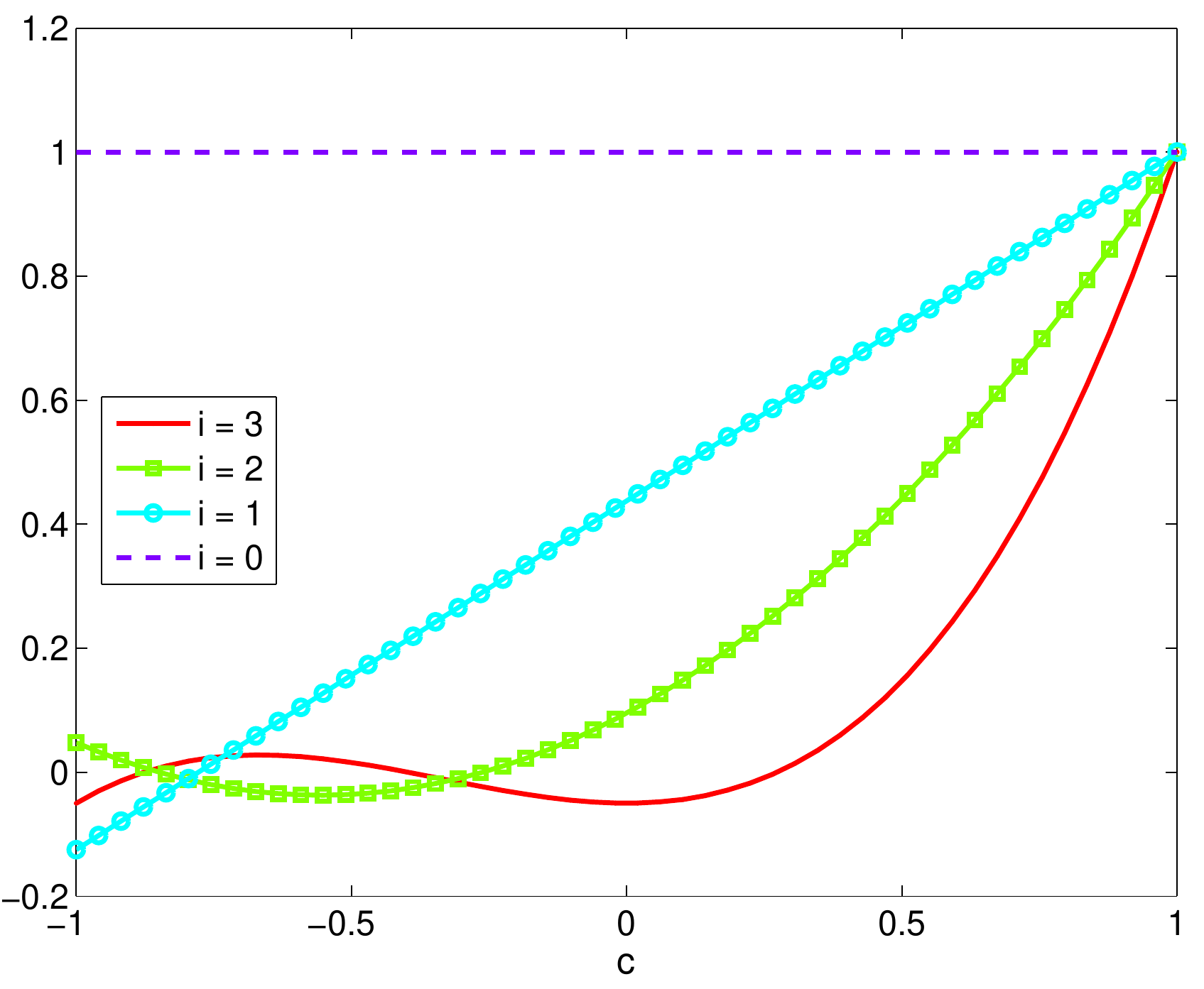}
\caption{Polynomials $P^{2N+3-2i}_i(c)$ for $N = 3$, normalized by their value at $c=1$.}
\end{figure}
Note also that a change of index from $i,j$ to $(N-i), (N-j)$ in the above proof gives 
\[
\int_{-1}^1 \LRp{\frac{1-c}{2}}^{2+ i+ j} P_{N-i}^{2i + 3,0}(c) P_{N-j}^{2j + 3,0}(c)\diff c = C^N_{N-i}\delta_{ij}, \quad C^N_{N-i} = \frac{N+2}{2^{2i+2}(2i+3)}.
\]
We may now construct a semi-nodal basis which is orthogonal on vertex-mapped pyramids by relying on the fact that the determinant of the Jacobian is bilinear in $a,b$ constant in $c$.    
\begin{lemma}
\label{lemma:orth}
Let $a_i^{k}, b^k_j$ denote $(k+1)$-point Gauss-Legendre quadrature points with corresponding weights $w_i^{k}, w_j^k$.  Let $\pyr$ be a vertex-mapped pyramid, and let $\phi_{ijk}$ be defined on the bi-unit cube as
\[
\phi_{ijk}(a,b,c) = \ell_i^{k}(a)\ell_j^{k}(b) \LRp{\frac{1-c}{2}}^{k} P^{2k+3}_{N-k}(c), 
\]
where $\ell_i^{k}$ is the order $k$ Lagrange polynomial which is zero at all but the $i$th $(k+1)$ Gauss-Legendre nodes, and $P^{2k+3}_{N-k}(c)$ is the Jacobi polynomial of degree $k$ with order-dependent weight $2k+3$.  Then, the $\phi_{ijk}$ are orthogonal with respect to the $L^2$ inner product over $\pyr$, and the entries of the mass matrix are
\[
M_{ijk,ijk} = {J_{ijk}w^{k}_i w^{k}_j}C^N_{N-k}
\]
where $J_{ijk}$ is the determinant of the Jacobian evaluated at quadrature points $a^{k}_i, b^{k}_j$.  
\end{lemma}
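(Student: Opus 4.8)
The plan is to evaluate the mass matrix entry $M_{ijk,i'j'k'}$ by pulling everything back to the bi-unit cube and exploiting both the product structure of $\phi_{ijk}$ and the special form of $J$ from Lemma~\ref{lemma:Jab}. Writing out the integral with the Duffy change-of-variables factor, the factor $\LRp{(1-c)/2}^2$ combines with the two factors $\LRp{(1-c)/2}^{k}$ and $\LRp{(1-c)/2}^{k'}$ coming from the basis functions, so the $c$-dependent part of the integrand is $\LRp{(1-c)/2}^{k+k'+2}P^{2k+3}_{N-k}(c)P^{2k'+3}_{N-k'}(c)$, while the $(a,b)$-dependent part is $\ell_i^{k}(a)\ell_{i'}^{k'}(a)\ell_j^{k}(b)\ell_{j'}^{k'}(b)\,J$.

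The first key step is to invoke Lemma~\ref{lemma:Jab}: since $J\in Q_{1,1,0}$ it is independent of $c$, so the triple integral factors as a one-dimensional integral in $c$ times a two-dimensional integral in $(a,b)$. The $c$-integral is exactly the quantity evaluated in the reindexed form of Lemma~\ref{lemma:cbasis} (with $i,j$ replaced by $k,k'$), which equals $C^N_{N-k}\delta_{kk'}$. This immediately kills every entry with $k\neq k'$, so it remains only to treat $k=k'$.

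For $k=k'$ the surviving factor is $\int_{-1}^1\int_{-1}^1 \ell_i^{k}(a)\ell_{i'}^{k}(a)\ell_j^{k}(b)\ell_{j'}^{k}(b)\,J\diff a\diff b$. The second key step is a sharp degree count: each Lagrange factor has degree $k$, so the products in $a$ and in $b$ each have degree $2k$, and since $J$ is bilinear it adds degree one in each variable, giving an integrand of degree $2k+1$ in each of $a$ and $b$. This is precisely the degree integrated without error by the tensor-product $(k+1)$-point Gauss-Legendre rule, so replacing the integral by this quadrature is exact.

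The final step is to apply the quadrature and use the nodal property $\ell_i^{k}(a_p^{k})=\delta_{ip}$: every term in the double sum vanishes unless $i=i'=p$ and $j=j'=q$, collapsing it to $w_i^{k}w_j^{k}J_{ijk}\delta_{ii'}\delta_{jj'}$ with $J_{ijk}=J(a_i^{k},b_j^{k})$ (well-defined since $J$ has no $c$-dependence). Multiplying by the $c$-integral gives $M_{ijk,i'j'k'}=J_{ijk}w_i^{k}w_j^{k}C^N_{N-k}\,\delta_{ii'}\delta_{jj'}\delta_{kk'}$, which is both the claimed diagonality and the stated diagonal value. I expect the only real subtlety to be the degree count rather than any genuine difficulty: the entire argument hinges on the single fact that a bilinear $J$ raises the Lagrange-product degree to exactly $2k+1$, which the $(k+1)$-point rule integrates exactly --- were $J$ of higher degree in $a,b$, or the quadrature coarser, the exact orthogonality would break down.
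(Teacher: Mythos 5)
Your proposal is correct and uses essentially the same ingredients and structure as the paper's proof: pull back to the cube, use Lemma~\ref{lemma:Jab} to make the $(k+1)$-point Gauss--Legendre rule exact for the $(a,b)$ factor, use the reindexed Lemma~\ref{lemma:cbasis} for the $c$ factor, and collapse the quadrature sum via the Lagrange nodal property. The one organizational difference is that you exploit the $c$-independence of $J$ to factor the integral and dispose of all $k\neq k'$ entries before any quadrature is invoked, whereas the paper applies quadrature first under the assumption $k\geq k'$; your ordering is in fact slightly cleaner, since it avoids the paper's implicit (and, strictly speaking, unjustified) treatment of $\ell_{i'}^{k'}(a_l^{k})$ as a Kronecker delta when $k'<k$, a step that the paper only gets away with because the $\delta_{kk'}$ from the $c$-integral annihilates those terms anyway.
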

\begin{proof}
Assume without loss of generality that $k \geq k'$.  By Lemma~\ref{lemma:Jab}, the tensor product of $(k+1)$-point Gauss-Legendre quadrature rules integrates exactly 
\[
\int_{-1}^1\int_{-1}^1J \ell_i^{k}(a) \ell_{i'}^{k'}(a)\ell_j^{k}(b) \ell_{j'}^{k'}(b)\diff a\diff b.
\]
The entries of the mass matrix are then
\begin{align*}
M_{ijk,i'j'k'} &= \int_{\refpyr} \phi_{ijk}\phi_{i'j'k'} J\diff x = \int_{-1}^1\int_{-1}^1\int_{-1}^1 \phi_{ijk}\phi_{i'j'k'} \LRp{\frac{1-c}{2}}^{2}J\diff a\diff b \diff c\\
&= \sum_{l=0}^{k} \sum_{m=0}^{k} w^{k}_{l} w^{k}_{m} \ell_i^{k}(a^{k}_l)\ell_{i'}^{k'}(a_{l}^{k})\ell_j^{k}(b_m^{k})\ell_{j'}^{k'}(b_{m}) J_{lmk} \\
& \quad \times \int_{-1}^1 P^{2k+3,0}_{N-k}(c) P^{2k+3',0}_{N-k'}(c) \LRp{\frac{1-c}{2}}^{2 + k + k'}\diff c\\
&= \sum_{l=0}^{k} \sum_{m=0}^{k}J_{lmk} \delta_{il}\delta_{ii'}\delta_{jm}\delta_{jj'}  w_l^{k} w_m^{k}C^N_{N-k}{\delta_{kk'}}\\
&= J_{lmk} \delta_{ii'}\delta_{jj'}  w_l^{k} w_m^{k}C^N_{N-k}{\delta_{kk'}}.
\end{align*}
where the integral over $c$ yields $C^N_{N-k}\delta_{kk'}$ by Lemma~\ref{lemma:cbasis}.
\end{proof}

\begin{figure}
\centering
\subfigure[$k=1$]{\includegraphics[width=.325\textwidth]{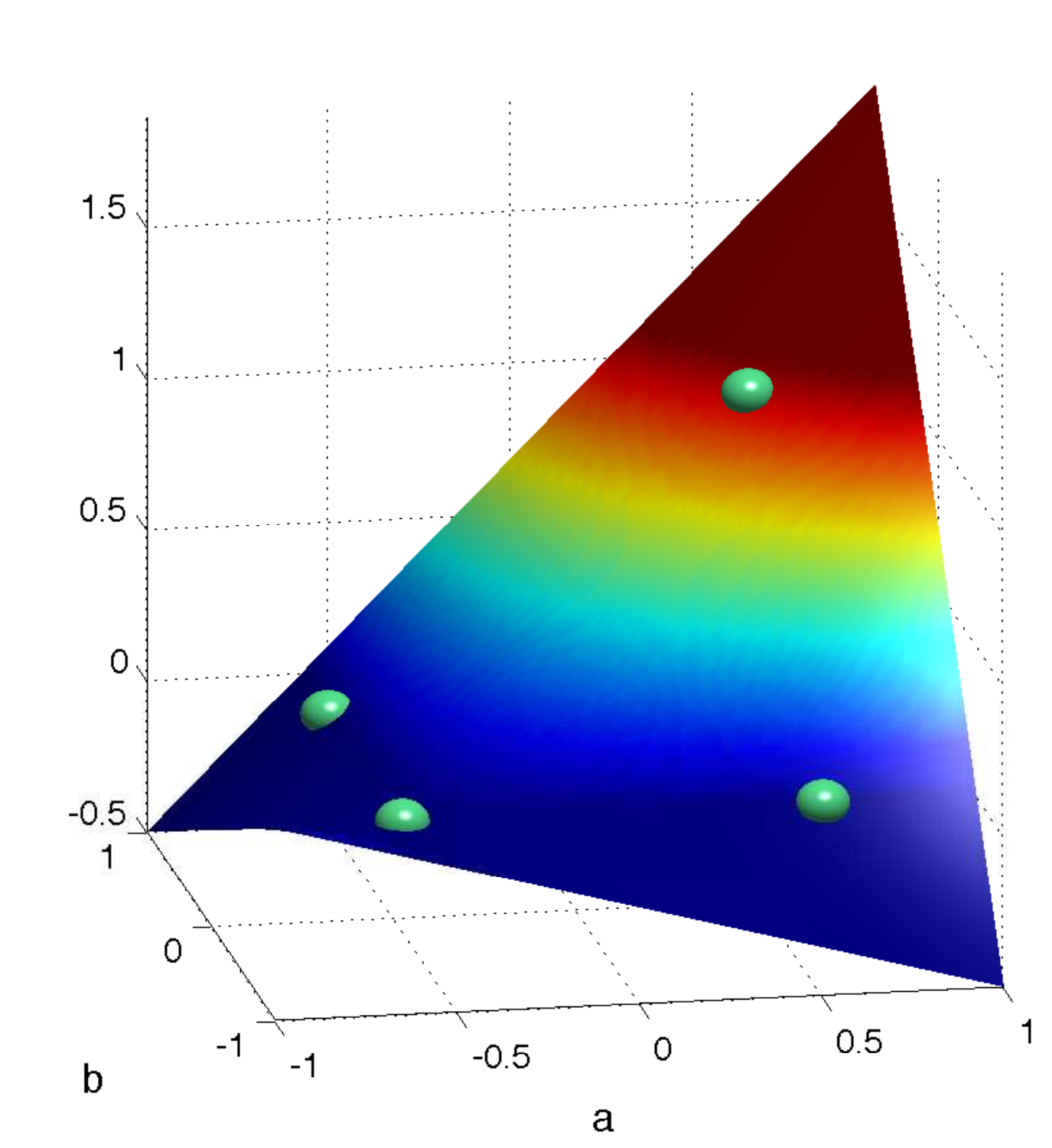}}
\subfigure[$k=2$]{\includegraphics[width=.325\textwidth]{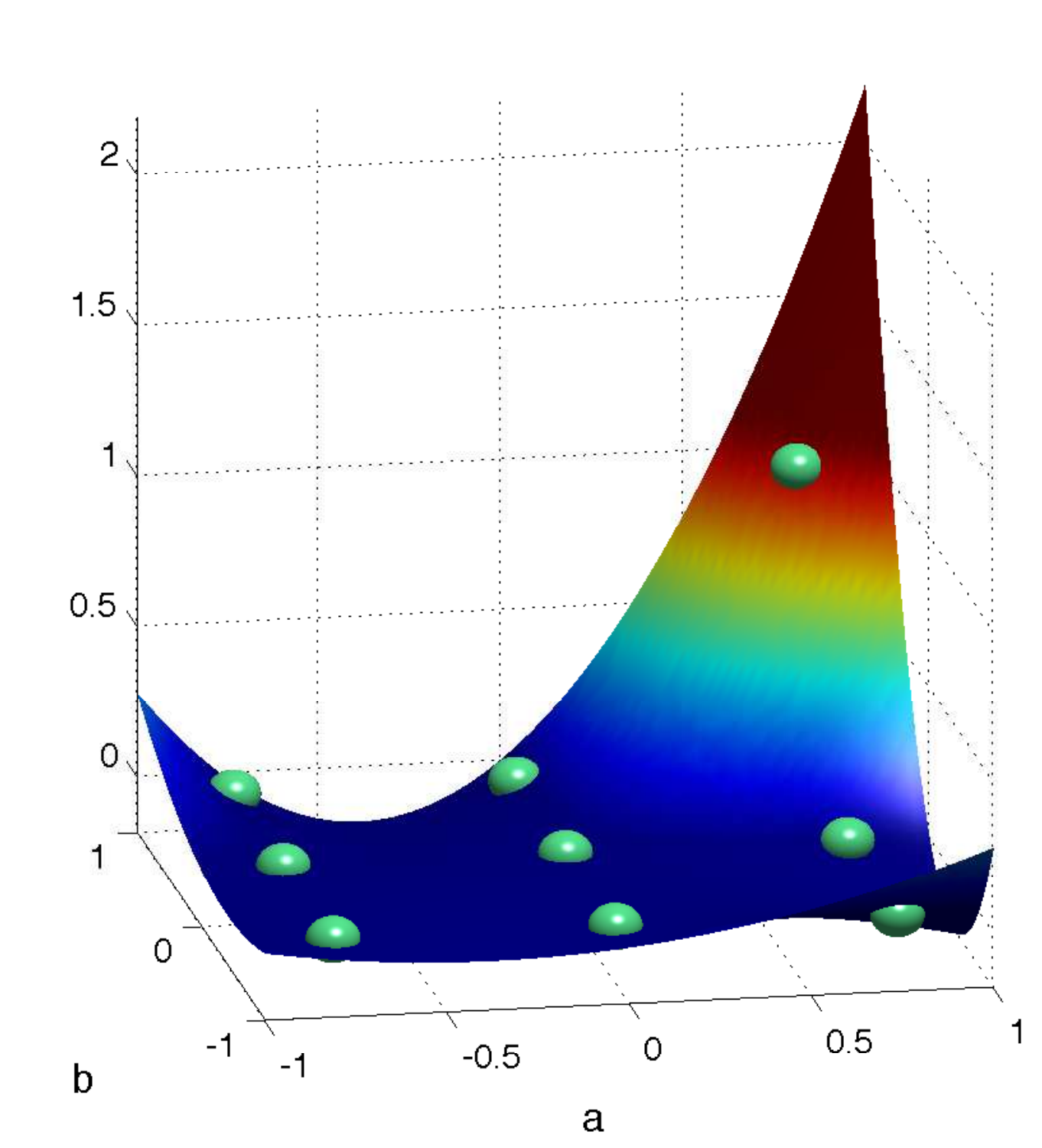}}
\subfigure[$k=3$]{\includegraphics[width=.325\textwidth]{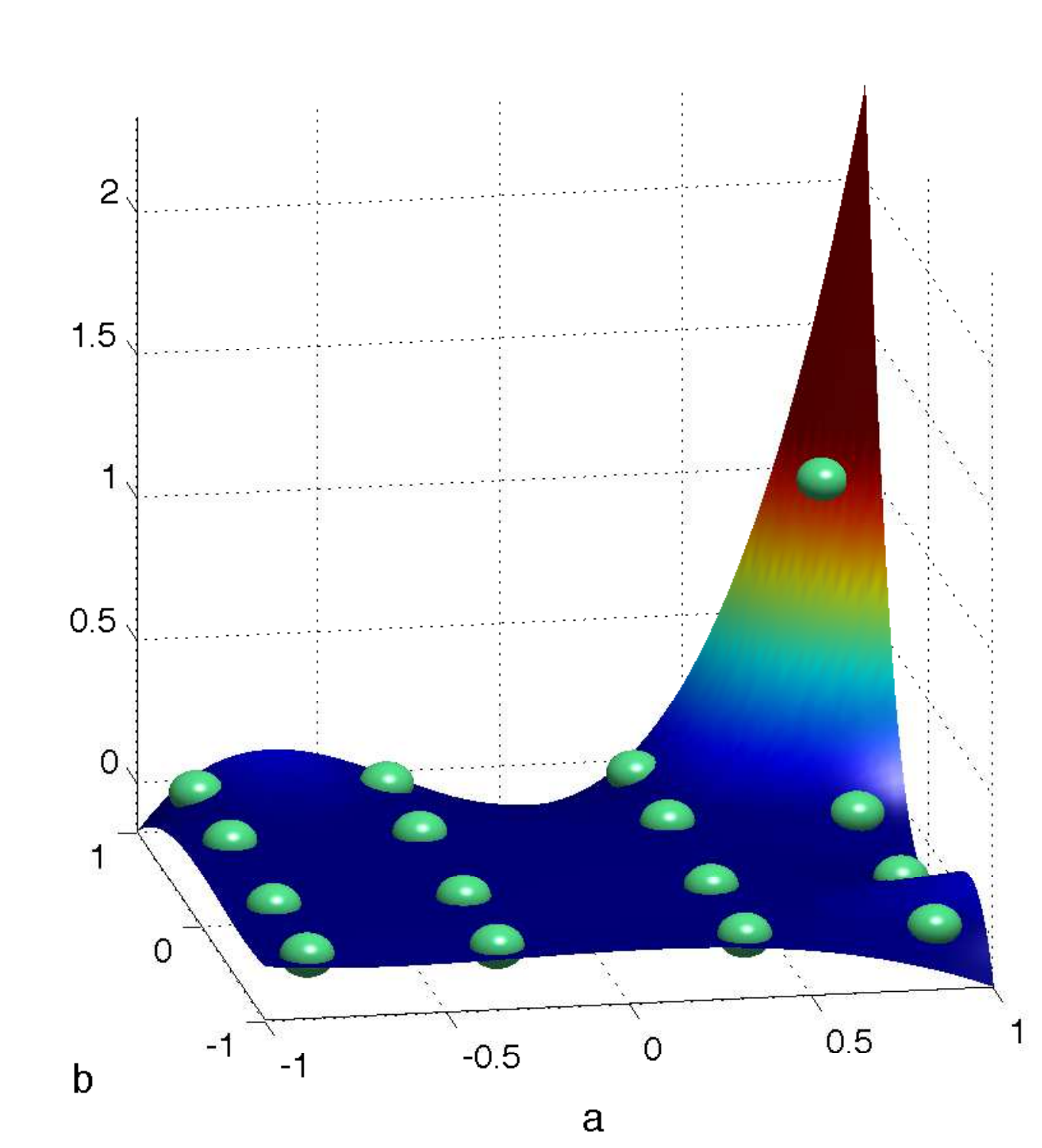}}
\caption{Polynomials $\ell^k_i(a)\ell^k_j(b)$ for $N = 3$, shown with Gauss-Legendre points overlaid.}
\label{fig:ab_basis}
\end{figure}

Figure~\ref{fig:ab_basis} shows $\ell^k_i(a)\ell_j^k(b)$, the basis in $a,b$.  The tensor product in the $c$ direction of these Lagrange polynomials with the weighted Jacobi polynomials in Figure~\ref{fig:cbasis} produces the orthogonal semi-nodal basis described in Lemma~\ref{lemma:orth}.  

\begin{lemma}
\label{lemma:span}
The semi-nodal basis defined by $\phi_{ijk}$ for $0 \leq k \leq N$ and $0\leq i,j\leq N-k$ spans the same approximation space as that of the orthonormal rational basis $\psi_{ijk}$ in Equation~(\ref{eqn:rationalbasis}).  
\end{lemma}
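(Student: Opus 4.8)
The plan is to establish equality of the two spans by combining a dimension count with a single containment. First I would observe that both families are linearly independent: the $\psi_{ijk}$ because they are orthonormal, and the $\phi_{ijk}$ because by Lemma~\ref{lemma:orth} their mass matrix is diagonal with strictly positive entries $J_{ijk}w_i^k w_j^k C^N_{N-k}$. Hence each family is a basis for its own span, and it suffices to check that the two spans have the same dimension and that one is contained in the other.

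For the dimension count I would reorganize the index set of the rational basis. The constraint $k \leq N - \mu_{ij}$ with $\mu_{ij} = \max(i,j)$ is equivalent, for each fixed $k$, to $0 \leq i,j \leq N-k$, so both families contain exactly $\sum_{k=0}^N (N-k+1)^2$ functions. Thus equality of the two spans will follow once I show $\mathrm{span}\{\phi_{ijk}\} \subseteq \mathrm{span}\{\psi_{ijk}\}$, the latter being the approximation space of \eqref{eqn:rationalbasis}.

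For the containment, the main idea is to rewrite each $\phi_{ijk}$ in the rational basis. First I would expand the Lagrange factors in Legendre polynomials, $\ell_i^k(a) = \sum_{p=0}^k \alpha_p P_p^{0,0}(a)$ and $\ell_j^k(b) = \sum_{q=0}^k \beta_q P_q^{0,0}(b)$, which is possible since each $\ell$ has degree $k$. This expresses $\phi_{ijk}$ as a combination of terms $P_p^{0,0}(a)P_q^{0,0}(b)\LRp{\frac{1-c}{2}}^k P^{2k+3}_{N-k}(c)$ with $\mu := \max(p,q) \leq k$. The exponent of $\LRp{\frac{1-c}{2}}$ appearing in $\phi_{ijk}$ is $k$, whereas in $\psi_{pqm}$ it is $\mu$; to reconcile these I would peel off the excess factor and write $\LRp{\frac{1-c}{2}}^k = \LRp{\frac{1-c}{2}}^\mu \LRp{\frac{1-c}{2}}^{k-\mu}$. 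The residual $c$-function $\LRp{\frac{1-c}{2}}^{k-\mu} P^{2k+3}_{N-k}(c)$ is a polynomial of degree $(k-\mu)+(N-k)=N-\mu$, so it expands in the Jacobi family $\LRc{P_m^{2\mu+2}(c)}_{m=0}^{N-\mu}$. Substituting this expansion exhibits each such term as a linear combination of the $\psi_{pqm}$ with $\max(p,q)=\mu$ and $m \leq N-\mu$, all legitimate members of the rational basis. Hence $\phi_{ijk}\in\mathrm{span}\{\psi_{ijk}\}$, and the dimension count closes the argument.

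The main obstacle is precisely this matching of the two different gradings in the $c$ variable: the semi-nodal functions couple the $(a,b)$-degree to the weight exponent $k$ uniformly, while the rational basis ties the weight exponent to $\max(i,j)$ term by term. The fact that makes the reconciliation work is that, after factoring out $\LRp{\frac{1-c}{2}}^\mu$, the leftover $c$-polynomial has exactly degree $N-\mu$, which is precisely the degree needed to lie in the span of $\LRc{P_m^{2\mu+2}}_{m=0}^{N-\mu}$ and which matches the permitted range $m \leq N-\mu$ of the index in $\psi_{pqm}$ at that value of $\mu$. Verifying this degree bookkeeping is the crux; everything else is routine.
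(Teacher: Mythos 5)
Your proposal is correct, but it takes a genuinely different route from the paper's proof. The paper quotes Proposition 1.8 of Bergot, Cohen, and Durufle to identify ${\rm span}\{\psi_{ijk}\}$ with $Q_N = \sum_{k=0}^N Q_k(a,b)(1-c)^k$, notes that the Lagrange factors $\ell^k_i(a)\ell^k_j(b)$ span $Q_k(a,b)$, and thereby reduces the lemma to a one-dimensional statement in $c$: that the polynomials $\LRp{(1-c)/2}^k P^{2k+3}_{N-k}(c)$, $k=0,\dots,N$, span $P_N(c)$; this is then proved by a counting/linear-independence argument (binomial expansion, distinct lowest-order terms $c^k$). You never use that characterization of the rational span: instead you prove the single containment ${\rm span}\{\phi_{ijk}\}\subseteq{\rm span}\{\psi_{ijk}\}$ by an explicit change of basis --- Lagrange to Legendre in $a,b$, then re-expansion of the residual $c$-polynomial of degree $N-\mu$ in the Jacobi family $P^{2\mu+2}_m$, checking that only admissible indices $m\le N-\mu$ arise --- and you close with a dimension count, with linear independence of each family supplied by orthogonality (orthonormality of $\psi_{ijk}$ on the reference pyramid, and the diagonal mass matrix of Lemma~\ref{lemma:orth} for $\phi_{ijk}$, whose entries are positive since $J>0$ on a non-degenerate vertex-mapped pyramid). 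As for what each approach buys: the paper's route is shorter and isolates the essential one-dimensional structure in $c$, but it leans on the cited proposition, and its reduction from the 1D span equality back to the full graded sum tacitly uses that the change of basis between $\{(1-c)^k\}$ and $\{\LRp{(1-c)/2}^k P^{2k+3}_{N-k}(c)\}$ respects the grading (only indices $\ge k$ appear, which follows from vanishing orders at $c=1$); your route is self-contained modulo the two orthogonality facts and confronts exactly that grading mismatch --- weight exponent tied to $k$ in $\phi_{ijk}$ versus to $\max(p,q)$ in $\psi_{pqm}$ --- explicitly, term by term, at the cost of heavier bookkeeping.
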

\begin{proof}
From Proposition 1.8 of \cite{bergot2010higher}, the orthonormal rational basis on the bi-unit cube $\psi_{ijk}$ spans the space
\[
Q_N = \sum_{k=0}^N Q_k(a,b)(1-c)^k,
\]
where $Q_k(a,b)$ consists of polynomials of order $k$ in both $a$ and $b$.  Since $\ell_i^k(a) \ell_j^k(b)$ spans $Q_k(a,b)$, and $Q_k(a,b)\supset Q_{k-1}(a,b)  \supset \ldots$, we need only to show that 
\[
{\rm span}\LRc{\LRp{\frac{1-c}{2}}^k P^{2k+3}_{N-k}(c),\quad k = 0,\ldots,N} = {\rm span}\LRc{\LRp{1-c}^k,\quad k = 0,\ldots,N } = P_N(c),
\]
where $P_N(c)$ is the space of polynomials of degree $N$ in $c$.  Since $\LRp{{1-c}/{2}}^k P^{2k+3}_{N-k}(c)$ is a polynomial of total degree $N$, it is automatically contained in $P_N(c)$, and it remains to prove the opposite inclusion.  Using a counting argument and the fact that $P_N$ has dimension $N+1$, we may prove the opposite inclusion by showing linear independence of $\LRp{{1-c}}^k P^{2k+3}_{N-k}(c)$, or equivalently $\LRp{{1-c}}^{N-k} P^{2N + 3 - 2k}_{k}(c)$ for $0\leq k\leq N$.  

Since $P^{2N + 3 - 2k}_{k}(c)$ has leading order term $c^{k}$, it is sufficient to show that 
$(1-c)^{N-k} c^{k}$ is linearly independent.  Using the binomial theorem, we may expand
\[
(1-c)^{N-k} c^{k} = \sum_{i=0}^{N-k} (-1)^i\binom {N-k}{i} c^{i+k}.
\]
The lowest order term in the above sum is $c^{k}$; since this term is distinct for each $0\leq k \leq N$, this implies linear independence of $(1-c)^{N-k} c^{k}$.
\end{proof}

The existence of an $L^2$ orthogonal basis on the vertex-mapped pyramid also allows us to characterize the spectra of the mass matrix more precisely.  
\begin{corollary}
\label{cor:mass_eigs}
The minimum and maximum eigenvalues of the mapped mass matrix under the rational basis (\ref{eqn:rationalbasis}) are given by
\[
\lambda_{\min} = J_{\min}, \qquad \lambda_{\max} = J_{\max}
\]
where $J_{\min}, J_{\max}$ are the minimum and maximum values of the determinant of the Jacobian, evaluated at the tensor product $(N+1)^2$-point Gauss-Legendre quadrature on the base of the pyramid.  
\end{corollary}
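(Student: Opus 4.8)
The plan is to exploit the fact that the rational basis $\psi_{ijk}$ in Equation~(\ref{eqn:rationalbasis}) is orthonormal with respect to the \emph{reference} inner product $\int_{\refpyr}(\cdot)\diff r\diff s\diff t$, so that the mapped mass matrix is merely the Gram matrix of an orthonormal set under the $J$-weighted inner product. Writing $f = \sum c_{ijk}\psi_{ijk}$ for an element of the pyramid space $V$ spanned by the $\psi_{ijk}$, orthonormality gives $c^T c = \int_{\refpyr} f^2 \diff r\diff s\diff t$, while $c^T M c = \int_{\refpyr} f^2 J\diff r\diff s\diff t = \int_{\pyr} f^2\diff x\diff y\diff z$. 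By the Rayleigh--Ritz characterization, $\lambda_{\min}$ and $\lambda_{\max}$ are then the extreme values over $f\in V\setminus\{0\}$ of
\[
R(f) = \frac{\int_{\refpyr} f^2 J \diff r\diff s\diff t}{\int_{\refpyr} f^2 \diff r\diff s\diff t} = \frac{\int_{[-1,1]^3} f^2 J \LRp{\frac{1-c}{2}}^2\diff a\diff b\diff c}{\int_{[-1,1]^3} f^2 \LRp{\frac{1-c}{2}}^2\diff a\diff b\diff c},
\]
after passing to $(a,b,c)$ coordinates, the factor $\LRp{(1-c)/2}^2$ appearing identically in numerator and denominator.

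Next I would reduce $R(f)$ to a weighted average of pointwise values of $J$. By Lemma~\ref{lemma:Jab}, $J$ is bilinear in $a,b$ and independent of $c$, while every $f\in V$ has degree at most $N$ in each of $a$ and $b$ for fixed $c$; hence, for each fixed $c$, the integrands $Jf^2$ and $f^2$ are polynomials of degree at most $2N+1$ and $2N$, respectively, in each of $a$ and $b$. The tensor-product $(N+1)$-point Gauss--Legendre rule, being exact to degree $2N+1$, therefore evaluates both $(a,b)$-integrals exactly. Writing $a_p^N, b_q^N$ for the base nodes with weights $w_p^N, w_q^N$ and setting $J_{pq} = J(a_p^N, b_q^N)$, this gives
\[
R(f) = \frac{\sum_{p,q} \beta_{pq} J_{pq}}{\sum_{p,q}\beta_{pq}},\qquad \beta_{pq} = w_p^N w_q^N \int_{-1}^1 \LRp{\frac{1-c}{2}}^2 f(a_p^N, b_q^N, c)^2 \diff c \geq 0.
\]
Since $R(f)$ is a convex combination of the base values $J_{pq}$, it is bounded below by $J_{\min} = \min_{p,q} J_{pq}$ and above by $J_{\max} = \max_{p,q} J_{pq}$, yielding $\lambda_{\min}\geq J_{\min}$ and $\lambda_{\max}\leq J_{\max}$.

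Finally I would show these bounds are attained by a concentrated test function. Letting $(p^*,q^*)$ be an index at which $J_{pq}$ is maximal, consider
\[
f = \ell_{p^*}^N(a)\,\ell_{q^*}^N(b)\LRp{\frac{1-c}{2}}^N,
\]
which lies in $V$ because $\ell_{p^*}^N\ell_{q^*}^N \in Q_N(a,b)$ and, by the spanning description used in Lemma~\ref{lemma:span}, the summand $Q_N(a,b)(1-c)^N$ is contained in $V$. By the nodal property $\ell_{p^*}^N(a_p^N) = \delta_{pp^*}$, the weights $\beta_{pq}$ vanish for $(p,q)\neq(p^*,q^*)$, so $R(f) = J_{p^*q^*} = J_{\max}$; taking instead a minimizing index gives $R(f) = J_{\min}$. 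Thus the Rayleigh quotient attains both extremes and the two inequalities become equalities.

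The main obstacle is the middle step: recognizing that the bilinearity of $J$ together with the degree of $V$ makes the $(N+1)^2$-point Gauss--Legendre rule \emph{exact} for both the weighted and unweighted $(a,b)$-integrals, which is precisely what collapses the continuous Rayleigh quotient onto the finite set of base-point values $J_{pq}$. Once that exactness is established, both the convex-combination bound and its attainment via nodal test functions follow directly.
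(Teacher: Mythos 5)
Your proof is correct, and it takes a genuinely different route from the paper's. The paper proceeds by explicit diagonalization: it invokes Lemma~\ref{lemma:orth} to produce the semi-nodal basis, rescales it to be orthonormal with respect to the reference inner product, and uses Lemma~\ref{lemma:span} to obtain a change of basis $S$ with $M_r = S^{-1}MS$ (a step that tacitly uses the fact that $S$ maps one reference-orthonormal basis to another, hence is an orthogonal matrix, so the congruence of mass matrices is actually a similarity). This identifies the \emph{entire} spectrum: every eigenvalue is a value of $J$ at a tensor Gauss--Legendre node $\LRp{a^k_i,b^k_j}$ of some level $k\le N$. The paper then locates the extreme eigenvalues among these nodal values using the bilinearity of $J$ (monotonicity in $a$ and $b$) together with the fact that the extreme Gauss--Legendre nodes move monotonically outward as the order increases, so the extremes occur on the level-$N$ grid, i.e.\ the $(N+1)^2$-point rule on the base. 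You instead bypass the diagonalization entirely: Rayleigh--Ritz reduces the problem to extremizing $\int f^2 J / \int f^2$ over the approximation space, exactness of the $(N+1)$-point tensor Gauss--Legendre rule for integrands of degree at most $2N+1$ in each of $a,b$ (using Lemma~\ref{lemma:Jab} and the degree structure from Lemma~\ref{lemma:span}) collapses the quotient to a convex combination of the base-node values $J_{pq}$, and attainment follows from concentrated nodal test functions --- which are, in fact, exactly the level-$N$ semi-nodal functions of Lemma~\ref{lemma:orth}. What each approach buys: the paper's argument yields the full eigenvalue list and explicit eigenfunctions, which is more information than the corollary states; your argument needs neither the cross-order monotonicity property of Gauss--Legendre nodes (which the paper invokes without proof) nor the similarity/change-of-basis step, and it establishes directly that the whole spectrum is confined to $[J_{\min}, J_{\max}]$, making it arguably the more self-contained proof of the stated claim.
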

\begin{proof}
Let $M_r$ be the mass matrix constructing using the rationa basis (\ref{eqn:rationalbasis}), and let $M$ be the mass matrix constructed using the semi-nodal basis.  Scaling $\phi_{ijk}$ by $w^k_i w^k_j C^N_{N-k}$ results in an orthonormal basis, implying that $M$ is diagonal with entries equal to the values of $J$ at quadrature points $a^k_i, b^k_i$.  Lemma~\ref{lemma:span} then implies that there is a linear change of basis $S$ from the rational basis $\psi_{ijk}$ to the semi-nodal basis $\phi_{ijk}$ such that $M_r = S^{-1} M S$.  Since $M$ is diagonal, $\phi_{ijk}$ are eigenfunctions of the mass matrix with corresponding eigenvalues equal to the diagonal entries of $M$.  

Since $J \in Q_{1,1,0}$ on the bi-unit cube, $J$ is bilinear in $a,b \in [-1,1]^2$. Since a bilinear function increases or decreases monotonically in $a$ and $b$, the maximum and minimum values of $J$ are attained at the points $a^k_i, b^k_j$ closest to the boundary of $[-1,1]^2$.  Since $a^k_i, b^k_j$ are given by the $k$th order Gauss-Legendre rule for $k \leq N+1$, and the extremal points of the $k$th order Gauss-Legendre quadrature approach $-1$ and $1$ monotonically in $k$, the extremal values of $J$ are attained for $k = N+1$.  

We complete the proof by noting that the evaluation of the Jacobian factor $J$ at fixed $a^k_i,b^k_j$ is constant in $c$, so we may choose $c = -1$, which corresponds to the quadrilateral base of the pyramid.  

\end{proof}

\section{Numerical results}

We begin by comparing the semi-nodal basis constructed in Lemma~\ref{lemma:orth} with two low-storage alternatives for mass matrix inversion --- Chebyshev iteration \cite{golub1961chebyshev} and the Low-Storage Curvilinear DG method \cite{warburton2013low}.  We then demonstrate the efficiency of GPU-accelerated discontinuous Galerkin methods on vertex-mapped pyramidal elements using the new proposed basis.  

\subsection{Comparison with Chebyshev iteration}

The Chebyshev iteration constructs an explicit matrix polynomial using recurrence relations for Chebyshev polynomials, and has experienced revived attention due to the fact that it may be formulated purely in terms of matrix-vector multiplications.  This is in contrast to Conjugate Gradients, which requires inner products at each iteration.  On parallel architectures where communication between processes is costly, this necessitates a reduction from a subset of parallel processes at each iteration.  However, unlike Conjugate Gradients, the Chebyshev iteration requires a-priori knowledge of tight bounds on the spectrum of the matrix; poor estimates of the minimum and maximum eigenvalue may result in slow or stalled convergence.

The Chebyshev method has previously been used in the global inversion of continuous Galerkin mass matrices by Wathen and Rees \cite{wathen2009chebyshev}, where eigenvalue bounds for the mass matrix were derived for linear and bilinear elements in 2D.   

Suppose the spectra of the mass matrix is contained in $[\lambda_{\min},\lambda_{\max}]$.  Then, for the solution $x$ to $Mx=b$, the $k$th Chebyshev iterate $x_k$ satisfies 
\[
\nor{x - x_k}_2 \leq \LRp{\frac{2\tau^k}{1 + \tau^{2k}}}\nor{x-x_0}_2, \quad \tau = \frac{1 - \sqrt{\lambda_{\min}/\lambda_{\max}}}{1 + \sqrt{\lambda_{\min}/\lambda_{\max}}}.
\]
This above error bound may also be rearranged to yield 
\begin{equation}
\nor{x - x_k}_2 \leq 2\LRp{\frac{\sqrt{\kappa}-1}{\sqrt{\kappa}+1}}^k\nor{x-x_0}_2,
\label{eq:cheb_rate}
\end{equation}
where $\kappa$ is the mass matrix condition number.  Wathen observed that, for the symmetric, positive-definite mass matrix, the error bound for the Chebyshev iteration in (\ref{eq:cheb_rate}) is nearly identical to the error bound for the Conjugate Gradients method; the only difference between the bound for Conjugate Gradients and (\ref{eq:cheb_rate}) is the norm in which the error converges.  

\begin{figure}
\centering
\subfigure[Warped pyramid]{\includegraphics[width=.475\textwidth]{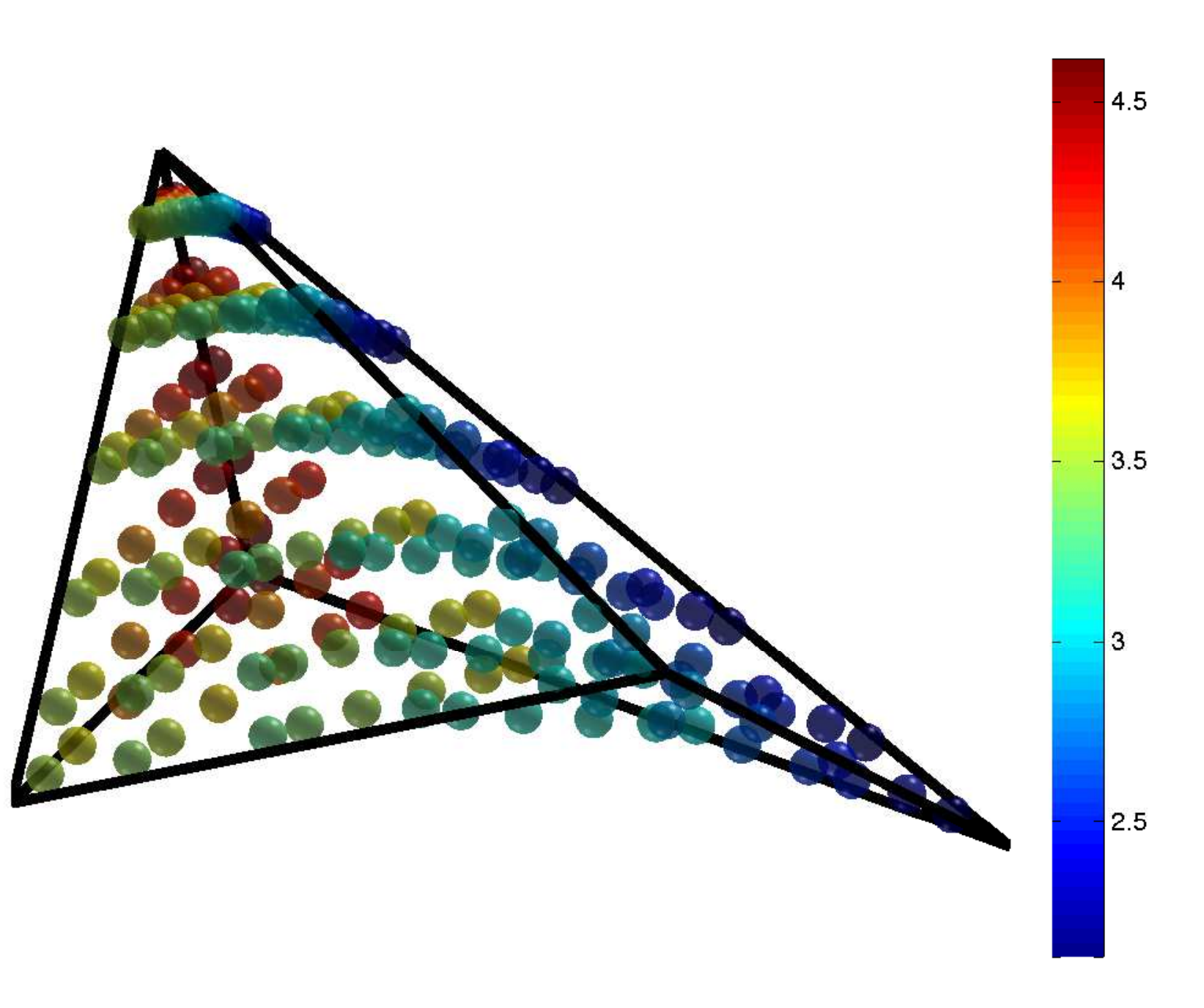}}
\subfigure[Chebyshev residual]{\includegraphics[width=.475\textwidth]{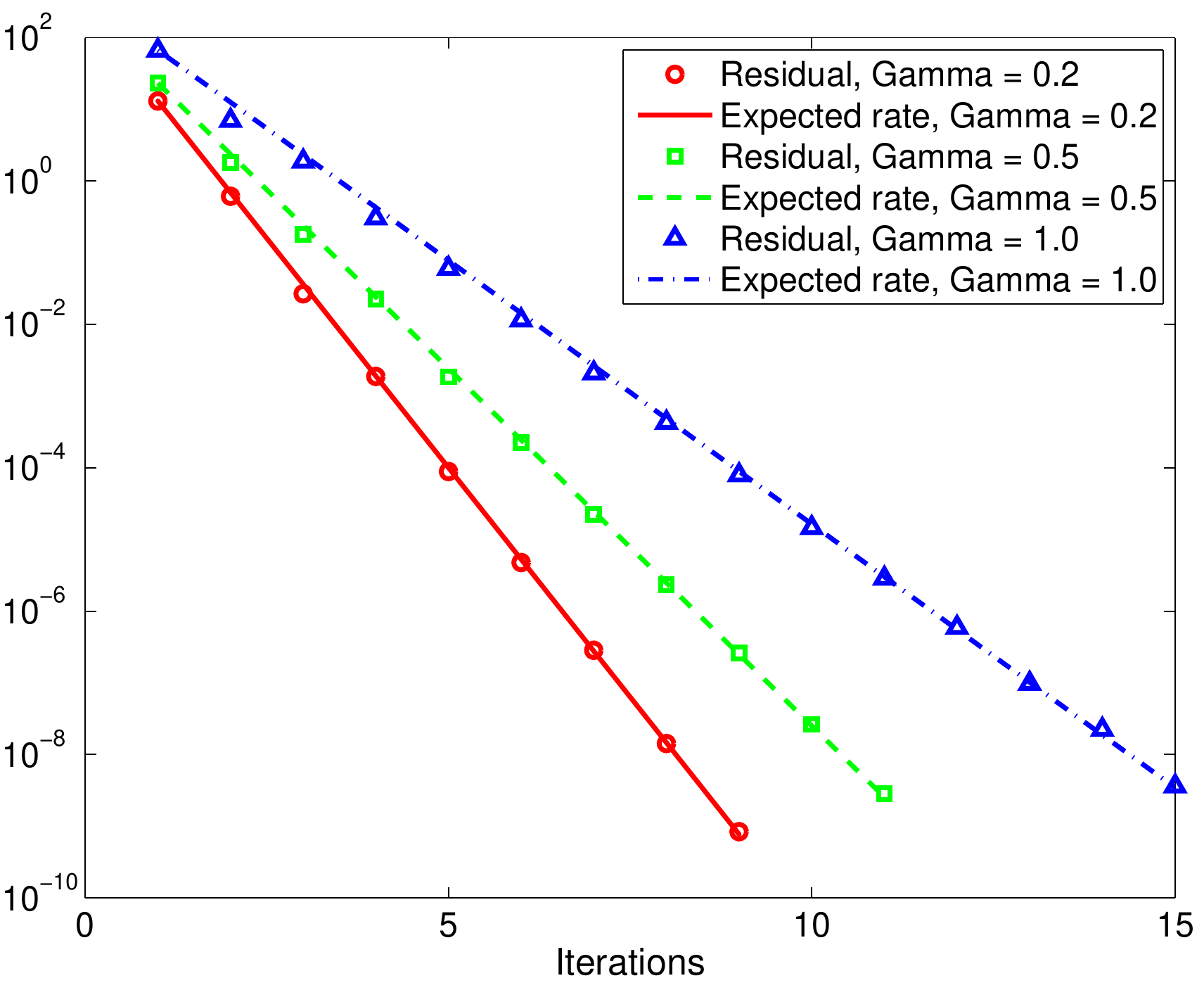}}
\caption{A warped pyramid with $\gamma = 1$, overlaid with values of the Jacobian at quadrature points for $N=5$ (left), and the convergence history of the Chebyshev iteration for various $\gamma$ (right).  The expected rate of convergence is given by (\ref{eq:cheb_rate}).}
\label{fig:warped_pyr}
\end{figure}

We consider Chebyshev iteration of the mapped mass matrix using the rational basis (\ref{eqn:rationalbasis}) of Bergot, Cohen, and Durufle.  The bounds on the maximum and minimum eigenvalues of the matrix are given by Corollary~\ref{cor:mass_eigs}, and are confirmed numerically.   We construct a mapped pyramid by warping the quadrilateral base with displacement magnitude $\gamma$, as shown in Figure~\ref{fig:warped_pyr}, along with the residual convergence of the Chebyshev iteration for various $\gamma$ and the expected rate of convergence given by (\ref{eq:cheb_rate}), which is observed to give an accurate estimate of the residual at each step.  No significant change was observed in the convergence of the Chebyshev iteration with increasing $N$; however, the results show that, even for a modestly warped pyramid, the iteration count is greater than 10, which is unacceptably high for the inversion of the mass matrix --- an $O(10)$ iteration count is relatively low for steady state or time-implicit methods, where a small number of time steps are used, but is a high cost for explicit-time discontinous Galerkin methods, which require multiple inversions of the mass matrix per time-step over millions of timesteps. 

Various preconditioners for the Chebyshev iteration were tested, with mixed results.  We observed that the diagonal of the mass matrix was constant under all vertex mappings of the pyramid, which rendered a Jacobi preconditioner ineffective.  We tested also an incomplete Cholesky factorization with tolerance $\epsilon$, which improved the number of iterations needed to reach convergence, but introduced additional memory costs for storing Cholesky factors for each element.  Additionally, the tolerance $\epsilon$ required to achieve a fixed number of iterations was observed to depend on the magnitude of the displacement $\gamma$, implying that, for fixed $\epsilon$, the effectiveness of incomplete Cholesky as a preconditioner would worsen as the shape regularity of the pyramid degrades.  For architectures such as GPUs, where device memory is typically $O(10)$ gigabytes, such additional storage costs could decrease the maximum problem size by a large factor.  

\subsection{Comparison with LSC-DG}

The Low-Storage Curvilinear DG method (LSC-DG) exploits the property of DG that local approximation spaces do not need to satisfy explicit conformity conditions.  Warburton proposed the use of specific basis functions
\[
\tilde{\phi_i}(x,y,z) = \frac{\phi_i(r,s,t)}{\sqrt{J}}, 
\]
where $\phi_i$ is the basis function over the reference element $\widehat{K}$, and $J$ is the determinant of the mapping Jacobian for the physical element $K$.  As a consequence, the entries of the mass matrix
\[
M_{ij} = \int_{K} \tilde{\phi_j}\tilde{\phi_i} \diff x \diff y \diff z = \int_{\widehat{K}} \frac{\phi_j\phi_i}{J} J\diff r \diff s \diff t = \int_{\widehat{K}} \phi_j\phi_i \diff r \diff s \diff t
\]
are simply the entries of the mass matrix over the reference element $\widehat{K}$ \cite{warburton2010low}.  

For isoparametric curvilinear mappings, $J$ is polynomial, implying that $\tilde{\phi_i}$ is rational.  Warburton showed that, under a scaling assumption on quasi-regular elements, using such basis functions incurs an additional constant in the bounds on the best approximation error between a function $u$ and its weighted projection $\Pi_w u$. Given such an element $K$ with size $h$ and Jacobian determinant $J$, the projection error may be bounded as follows
\[
\nor{u-\Pi_w u}_{L^2(K)} \leq C h^{N+1}\nor{\frac{1}{\sqrt{J}}}_{L^{\infty}(K)}\nor{\sqrt{J}}_{W^{N+1,\infty}(K)}\nor{u}_{W^{N+1,2}(K)}
\]
where $\nor{\cdot}_{W^{N+1,\infty}(K)}$ denotes the $L^{\infty}$ Sobolev norm of order $N+1$ over $K$.  For comparison, the projection error bound for curvilinear mappings using standard mapped bases is
\[
\nor{u-\Pi_w u}_{L^2(K)} \leq Ch^{N+1} \nor{\frac{1}{\sqrt{J}}}_{L^{\infty}(K)}\nor{\sqrt{J}}_{L^{\infty}(K)}\nor{u}_{W^{N+1,2}(K)}.
\]
In other words, accuracy of approximation using rational LSC-DG basis functions comes with stricter requirements on the smoothness of the determinant of the Jacobian $J$.  

Because the mapping for pyramids with non-parallelogram bases involves factors of $(1-t)^{-1}$, the derivative of the Jacobian mapping gains higher and higher inverse powers of $(1-t)$.  Since the $W^{\infty,N+1}$ norm of $\sqrt{J}$ is ill-defined due to this singularity, the bound on LSC-DG projection error does not hold, and we are not guaranteed convergence.  
To illustrate this, we compare projections of the smooth function 
\[
f(x,y,z) = \cosh(x+y+z)
\]
on warped pyramids.  $L^2$ projections using the LSC-DG pyramid basis and the semi-nodal basis of Lemma~\ref{lemma:orth} are computed on both the warped element shown in Figure~\ref{fig:warped_pyr} for $\gamma = .2, .5, 1$, and on meshes of pyramid elements.  These meshes are constructed by subdividing the bi-unit cube into $K_{\rm 1D} \times K_{\rm 1D}\times K_{\rm 1D}$ hexahedra, where $K_{\rm 1D}$ is the number of subdivisions along each edge of the cube.  Each hexahedra is then subdivided into 6 pyramids, and the pyramid vertex positions are perturbed randomly to ensure that the determinant of the mapping Jacobian is non-constant.  $L^2$ errors are computed on each mesh at various orders of approximation $N$.
\begin{figure}
\centering
\label{fig:LSCerror}
\subfigure{\includegraphics[width=.45\textwidth]{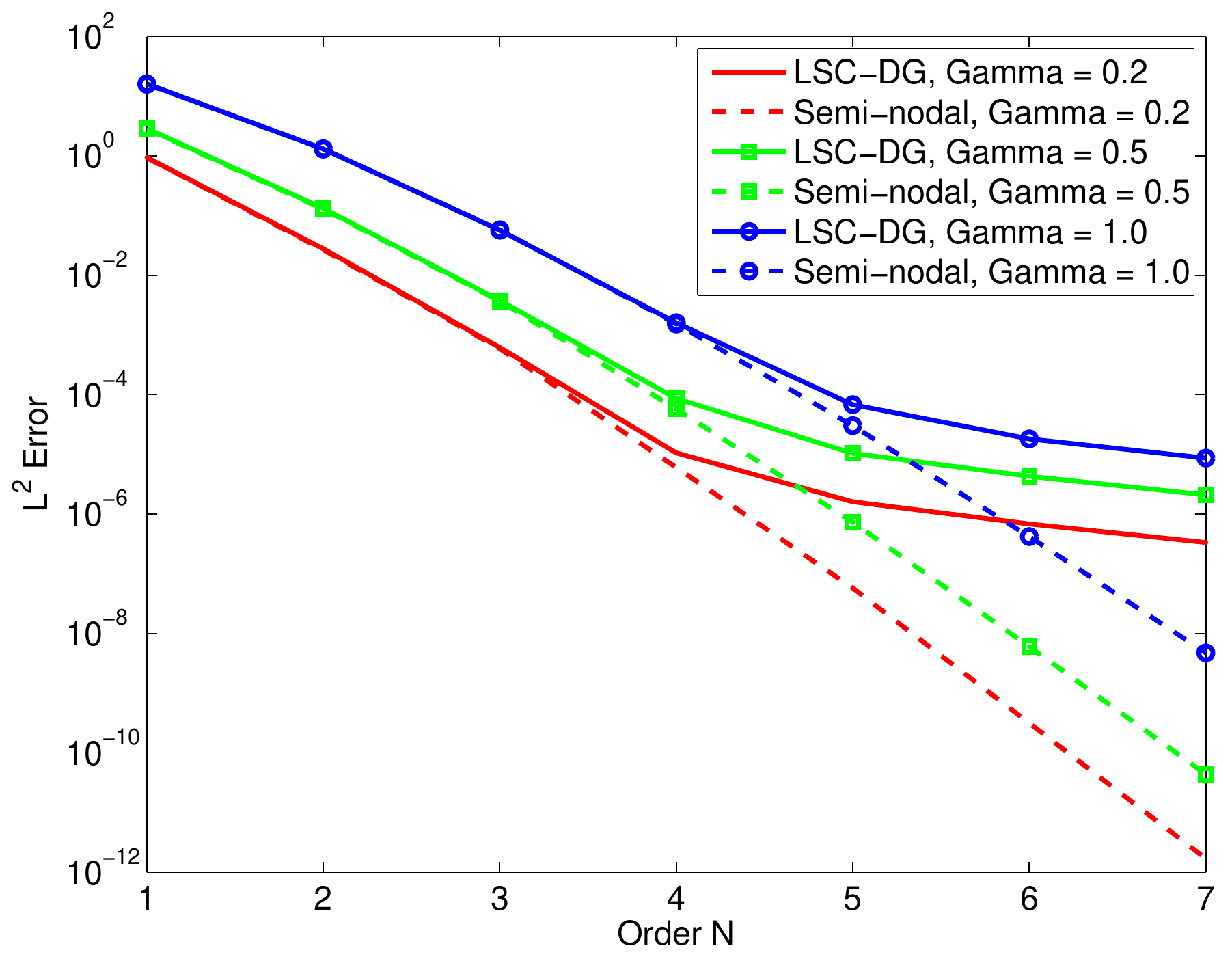}}
\subfigure{\includegraphics[width=.45\textwidth]{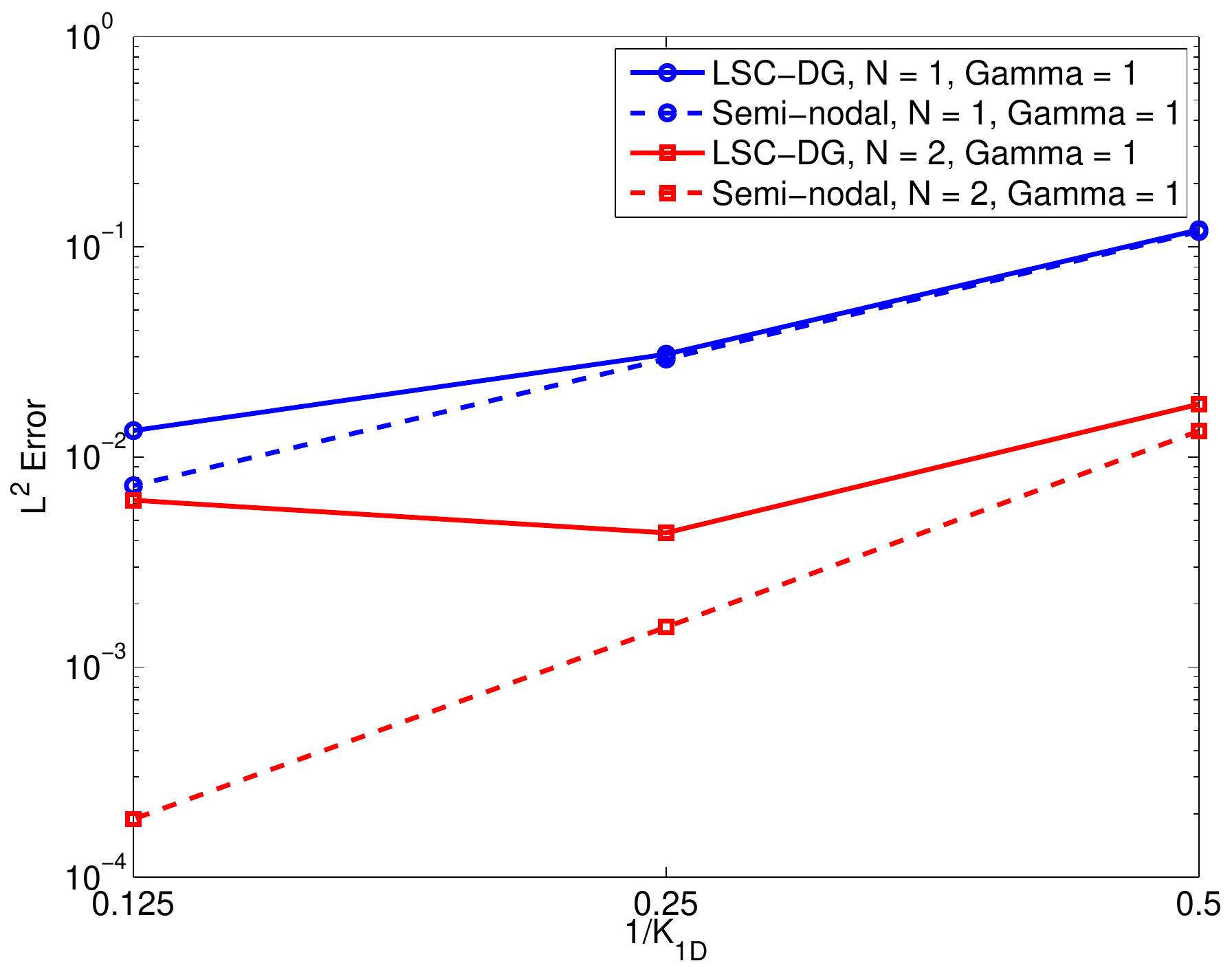}}
\caption{$L^2$ projection errors for the LSC-DG and semi-nodal orthogonal pyramid bases under increasing $N$ and various warpings of the reference pyramid (left), as well as under mesh refinement (right).  }
\end{figure}
Figure~\ref{fig:LSCerror} shows the $L^2$ error for each basis under refinement in both $h$ and $N$, with the convergence of the LSC-DG error stalling under refinement in each case.  We note that Bergot, Cohen, and Durufle also observed that error stalled under mesh refinement for fixed order $N$.  The shape regularity of the pyramid (which is controlled by $\gamma$) also affects the approximation error, but only by a constant factor.  

\section{Efficient discontinuous Galerkin methods on pyramids}

Finally, to ascertain the effectiveness of the semi-nodal basis for discontinuous Galerkin methods, we examine numerical solutions of the advection equation and the acoustic wave equation using time-explicit DG.  

\subsection{Advection equation}

We consider the advection equation on a bi-unit cube $[-1,1]^3$ with periodic boundary conditions
\[
\pd{u}{t}{} + \Div \LRp{\vect{\beta} u} = 0,
\]
where $\vect{\beta}$ is a vector indicating direction of advection.  We assume a mesh $\Omega_h$ consisting purely of pyramidal elements $K$.  For each face of $K$, we refer to the outward normal as $\vect{n}$.  Let $K \in \Omega_h$ denote a specific element, and let $u^-, v^-$ denote the trace of the solution $u$ and a test function $v$, respectively, on a face.   We may then define the jump $\jump{u}$ and average $\avg{u}$ over a face as 
\[
\jump{u} = u^- - u^+, \qquad \avg{u} = \frac{u^- + u^+}{2}.
\]
Let $\vect{\beta_n} = \vect{\beta}\cdot \vect{n}$.  Then, the semidiscrete discontinuous Galerkin formulation of the advection equation is given locally as
\[
\int_K v^-\LRp{\pd{u}{t}{} + \Div \LRp{\vect{\beta}{u}}} \diff x+  \int_{\partial K} \LRp{\frac{\vect{\beta_n}-\alpha \LRb{\vect{\beta_n}}}{2}} v^- \jump{u} \diff x = 0,
\]
where $\alpha$ is a parameter.  For $\alpha = 1$, an upwind numerical flux is recovered, while for $\alpha = 0$, a central flux is recovered \cite{hesthaven2007nodal}.  The formulation is then discretized by representing $u,v$ using the semi-nodal basis $\LRc{\phi_{i}}_{i = 1}^{N_p}$ defined in Lemma~\ref{lemma:orth}.  This results in a system of ODEs
\[
\td{u}{t} +  M^{-1}\LRp{\sum_{k=1}^3 S^k u +  L^f F} = 0,
\]
where $L^f, S^k$ are the lift operator and weak derivative matrix defined by
\begin{align*}
L^f_{ij}  &= \int_{\partial K} \phi_i(x) \phi_j(x) \diff x \approx \sum_{l=1}^{N^f_c}w_l \phi_i(x_l) \phi_j(x_l) \\
S^k_{ij} &= \int_{K} \phi_i(x)\vect{\beta}_k(x) \pd{\phi_j}{x_k}  \diff x \approx  \sum_{l=1}^{N_c}w_l \phi_i(x_l) \vect{\beta}_k(x_l)\pd{\phi_j(x_l)}{x_k},
\end{align*}
and $F_l$ is the flux at the quadrature point $x_l$.  $N_c$ and $N^f_c$ denote the number of quadrature points for volume and surface integrals, respectively.  For constant advection, $N_c$ and $N^f_c$ are taken to be the minimum number of quadrature points required to integrate the mass matrix exactly.\footnote{Non-constant advection is treated identically, though it may be beneficial to increase the number of quadrature points in order to offset under-integration (aliasing) effects. }  We adopt this minimial quadrature rule, which is defined on the bi-unit cube using a $(N+1)^2$ point tensor product Gauss-Legendre quadrature in the $a,b$ coordinates and an $(N+1)$ point Gauss-Jacobi quadrature with weights $(2,0)$ in the $c$ direction.  The resulting points are then mapped using the Duffy-type transform to the bi-unit pyramid.  
We note that the integrals of derivatives of rational basis functions on the pyramid may be computed exactly using the minimal quadrature rule due to cancellation of the rational Jacobian factors with change of variables factors for the derivative.  For $\alpha \in [0,1]$ and periodic boundary conditions, 
the DG formulation is energy stable \cite{hesthaven2007nodal}.  

The resulting system of ODEs may then be solved in time using a method of lines discretization, such as low-storage $4$th order Runge-Kutta \cite{carpenter1994fourth}. For DG on affine-mapped simplicial elements, each physical mass matrix is a constant scaling of the reference mass matrix, and $M^{-1}$ may be precomputed on the reference element and premultiplied with the lift and weak derivative matrices.  For DG on pyramids, the mass matrix differs on each element, but is diagonal under the semi-nodal pyramid basis.  Thus, instead of precomputing individual operators for each element, we precompute the diagonal factors of $M^{-1}$ and apply them at each timestep.  

\subsection{GPU acceleration} 

Typical GPU-accelerated implementations break up the solution of the system of ODEs resulting from the DG discretization into three steps: computation of volume integrals, surface integrals, and a Runge-Kutta update step, which are performed by \verb+VolumeKernel+,  \verb+SurfaceKernel+, and \verb+UpdateKernel+, respectively:
\[
\underbrace{\td{u}{t} + M^{-1}}_{\verb+UpdateKernel+}\left( \underbrace{\sum_{k=1}^3 S^k u}_{\verb+VolumeKernel+} +  \underbrace{L^f F}_{\verb+SurfaceKernel+} \right)= 0.
\]
This implementation differs slightly from simpler GPU-accelerated implementations of DG methods, in that \verb+UpdateKernel+, in addition to advancing forward in time, applies the inverse of the mass matrix and interpolates the solution to surface cubature points.  Work is partitioned such that elements (or batches of elements) are assigned to independent work-groups, while each work-item/thread processes work for either a single basis function or cubature node.  

We consider first, for simplicity of presentation, a purely modal DG method for the pure convection equation with $\beta = [1,0,0]^T$, and outline the approach used to implement a solver on the GPU.  The resulting system of equations for this specific constant advection problem is then
\[
\td{u}{t} +  M^{-1}\LRp{S^x u +  L^f F} = 0, \qquad L^f_{ij}  = \int_{\partial K} n_x\phi_i(x) \phi_j(x) \diff x, \qquad S^x_{ij} = \int_{K} \phi_i(x) \pd{\phi_j}{x}  \diff x.
\]

\subsubsection{Volume kernel}

\begin{algorithm}
\begin{algorithmic}[1]
\Procedure{Volume kernel}{}
\State Compute derivatives at cubature points $\vect{x}_i$ for $i = 1,\ldots, N_c$.  
\[
\pd{u(\vect{x}_i)}{x}{} = \sum_{j = 1}^{N_p} \LRp{D^r_{ij} u_j \pd{r(\vect{x}_i)}{x} + D^s_{ij} u_j \pd{s(\vect{x}_i)}{x} + D^t_{ij} u_j \pd{t(\vect{x}_i)}{x}}.
\]
\State Scale by premultiplied values of $w_i J_i$, compute integral by multiplying by $V^T$.
\[
\int_{K} \phi_i \pd{u}{x}{} = \sum_{i=1}^{N_c} V^T_{ji}  w_i J_i \pd{u(\vect{x}_i)}{x}{}
\]
\EndProcedure
\end{algorithmic}
\caption{Computation of volume integrals.}
\label{alg:vol}
\end{algorithm}

The computation of volume integrals requires evaluation of solution values at cubature nodes and computation of quadrature sums.  We assume, for the reference pyramid, $N_c$ volume cubature points $r_i, s_i, t_i$ and weights $w_i$.  Let $V$ represent the volume Vandermonde matrix, and let $D^r, D^s, D^t$ represent the derivative matrices with respect to reference coordinates $r,s,t$:
\[
V_{ij} = \phi_j(r_i,s_i,t_i), \quad D^r_{ij} = \pd{\phi_j\LRp{r_i,s_i,t_i}}{r}{}, \quad D^s_{ij} = \pd{\phi_j\LRp{r_i,s_i,t_i}}{s}{}, \quad D^t_{ij} = \pd{\phi_j\LRp{r_i,s_i,t_i}}{t}{}.
\]
We store the above $N_c \times N_p$ matrices, as well as $V^T$, only for the reference element.  By storing geometric change-of-variables factors $\pd{r}{x}{}, \pd{r}{y}{},\pd{r}{z} \ldots$ and determinants $\LRb{J_i}$ of Jacobian mappings at the $N_c$ volume cubature points at each element, we may compute the integral
\[
(S^x u)_i = \int_{K} \phi_i \pd{u}{x}{}  \diff x \diff y \diff z = \int_{\widehat{K}} \phi_i(r,s,t) \LRp{\pd{u}{r}{} \pd{r}{x}{} + \pd{u}{s}{} \pd{s}{x}{} + \pd{u}{t}{} \pd{t}{x}{} } J \diff r \diff s \diff t 
\]
using $V, D^r, D^s D^t$ and the Jacobian factors premultiplied by quadrature weights $w_iJ_i$, as described in Algorithm~\ref{alg:vol}.  

\subsubsection{Surface Kernel}

We assume $N^f_c$ total surface cubature points (over all the faces of the pyramid) $r^f_i,s^f_i,t^f_i$ with surface cubature weights $w^f_i$, and we define $V^f$ as the surface Vandermonde matrix $V^f_{ij} = \phi_j\LRp{r^f_i,s^f_i,t^f_i}$.  Storing normals $n_{x,i}, n_{y,i}, n_{z,i}$ and determinants of Jacobian mappings $J^f_i$ at surface cubature points $\widehat{x}^f_i$, we may compute surface integrals of the flux $F$
\[
(L^f F)_i = \int_{\partial K} \phi_i F \diff x, \quad F=\LRp{n_x - \alpha\LRb{n_x}}\jump{u},
\]
as described in Algorithm~\ref{alg:surf}.  This approach differs slightly from that of standard nodal DG algorithms in that it does not loop over faces, but computes over all cubature points on the surface of a pyramid at once.  This is due to the inhomogeneous nature of the faces on a pyramid --- while it is possible to use low-memory techniques (discussed in more detail in Section~\ref{sec:improvements}) to compute surface integrals on triangular and quadrilateral faces, they require differentiation between the types of faces within a kernel, or separate kernels for triangular and quadrilateral faces.  

\begin{algorithm}
\begin{algorithmic}[1]
\Procedure{Surface kernel}{}
\State Compute flux $F=\LRp{n_x - \alpha\LRb{n_x}}\jump{u}$ at face cubature points, scale by surface Jacobian factors and weights $J^f, w^f$.
\[
w^f_i J^f_i F_i.
\]
\State Compute integral by multiplying by $(V^f)^T$
\[
(L^f F)_i = \int_{\partial K} \phi_i F(u_f) = \sum_{i=1}^{N_c} (V^f)^T_{ji}  w^f_i J^f_i{F}_i.
\]
\EndProcedure
\end{algorithmic}
\caption{Computation of surface integrals.}
\label{alg:surf}
\end{algorithm}

\subsubsection{Update Kernel}

Given the diagonal entries of the mass matrix, the timestep $dt$, and the $k$th step RK coefficients $r^k_a, r^k_b$, the update kernel inverts the mass matrix, performs both a Runge-Kutta substep to march the solution to the next time, and interpolates the new solution to surface cubature nodes for use in the next surface kernel, as shown in Algorithm~\ref{alg:rk}.  

\begin{algorithm}
\begin{algorithmic}[1]
\Procedure{Runge-Kutta update step}{}
\State Compute right-hand side 
\[
b^k_i = \frac{1}{M_{ii}}b_i,
\]
where $b_i$ is the sum of volume and surface integrals.  
\State Update residual $r$ and local solution at the $k$th RK step
\[
r_i = r^k_a r_i + dt  b^k_i, \qquad u^k_i = u_i + r^k_b r_i.  
\]
\State Interpolate local solution to face cubature points using the face Vandermonde matrix $V^f$.
\[
u^{f,k} = V^f u^k.
\]
\EndProcedure
\end{algorithmic}
\caption{Runge-Kutta update step with added interpolation to face cubature points.}
\label{alg:rk}
\end{algorithm}

 \subsubsection{Kernel optimization}


We attempted to optimize the above kernels by minimizing the number of memory accesses, minimizing non-unit strided memory accesses, maximizing the speed at which data is accessed, or hiding the effect of latency in accessing data.  

When data must be accessed repeatedly, we take advantage of the GPU memory hierarchy.  Data that is used repeatedly within a workgroup is loaded to shared memory, and data used repeatedly in threads is loaded to register memory, both of which allow for fast data retrieval.\footnote{Since the shared memory available on GPUs is limited, using a large amount shared memory in a kernel will reduce the number of concurrent active work groups, so we do not load derivative matrices and interpolation operators to shared memory due to their large number of entries in 3D.  These matrix reads are still relatively efficient due to coalescing and caching effects.}  Likewise, since register memory is limited, multiple inputs are concatenated into strided arrays in order to decrease register pressure.  

Finally, we may hide the latency present in memory accesses by exploiting work which may be done concurrently.  For example, we may prefetch values (such as geometric factors in the computation of volume integrals) before executing other independent commands.  Additionally, prefetching may facilitate additional compiler optimizations; though the volume kernel achieves roughly the same performance with and without prefetching under CUDA, the volume kernel with prefetching achieves an extra 10-25 GFLOPS when running under OpenCL.  

To assess the computational performance of the semi-nodal pyramid basis, we implemented a GPU-accelerated DG solver using the OCCA scripting language \cite{medina2014occa}.  The solver is written using OCCA kernels, which may then be expanded to various threading languages for portability across differing architectures.  Numerical experiments suggest that OCCA kernels, translated into CUDA, OpenCL, or OpenMP, perform nearly as well as hand-tuned kernels written directly in the native language \cite{gandham2014gpu}.    The DG solver is run for a fixed number of timesteps on an Nvidia GeForce GTX 980 using CUDA (which we abbreviate as ``Nvidia'') and an AMD Tahiti GPU using OpenCL (which we abbreviate as ``AMD'').  Additionally, we ran the same kernels up to $N=4$ on an Intel Core i7-5960X CPU using OpenMP (which we abbreviate as ``CPU'').  The GFLOPS and estimated effective bandwidth of each kernel are reported in Figure~\ref{fig:advec_cpu}.  We note that the effective bandwidth estimates do not consider caching effects; as a result, the reported numbers may exceed the maximum available device bandwidth.  Table~\ref{table:legend} gives a legend of abbreviations and their respective computational platforms.  

\begin{table}
\centering                                                                                   
\begin{tabular}{| l | l |}
\hline
AMD & AMD Tahiti + OpenCL\\ 
\hline
Nvidia & Nvidia GeForce GTX 980 + CUDA\\ 
\hline
CPU & Intel Core i7-5960X + OpenMP\\
\hline
\end{tabular}
\caption{Legend of abbreviations for different computational architectures.}
\label{table:legend}
\end{table}

The mesh is taken to be a $16\times 16 \times 16$ mesh of hexahedral elements, each of which is then subdivided into 6 pyramids  to produce 24576 elements.  The order is varied from $N=1$ to $N=6$ (order is limited to $N=5$ when using OpenCL, due to the memory limitations on workgroup size), and both GFLOPS and estimated effective bandwidth (averaged over three runs) are reported for the volume and surface kernels in Figures~\ref{fig:advec_vol_gflops}, \ref{fig:advec_surf_gflops}, and \ref{fig:advec_rk_gflops}.  

\begin{figure}
\centering
\subfigure[GFLOPS]{\includegraphics[width=.45\textwidth]{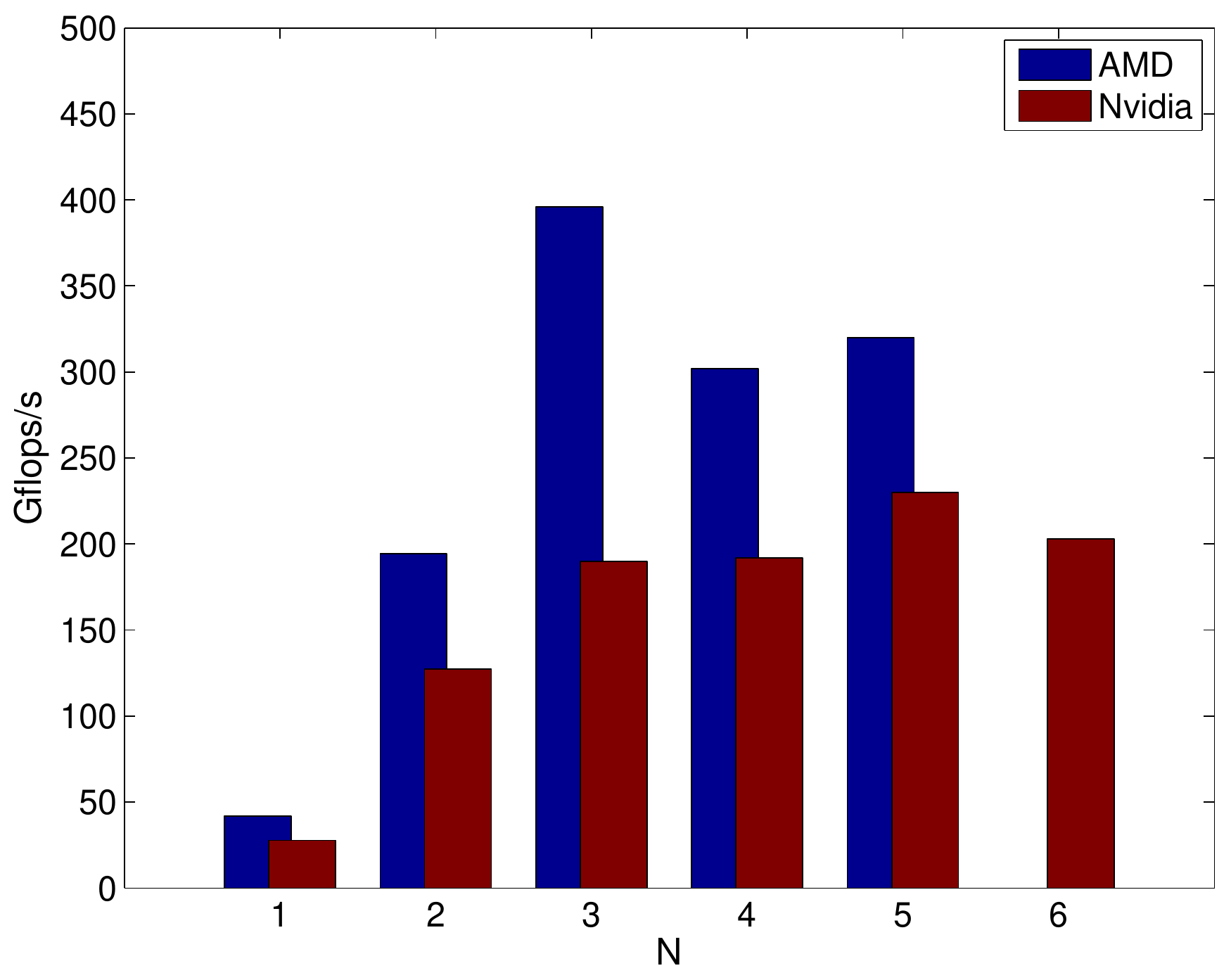}}
\subfigure[Est. bandwidth]{\includegraphics[width=.45\textwidth]{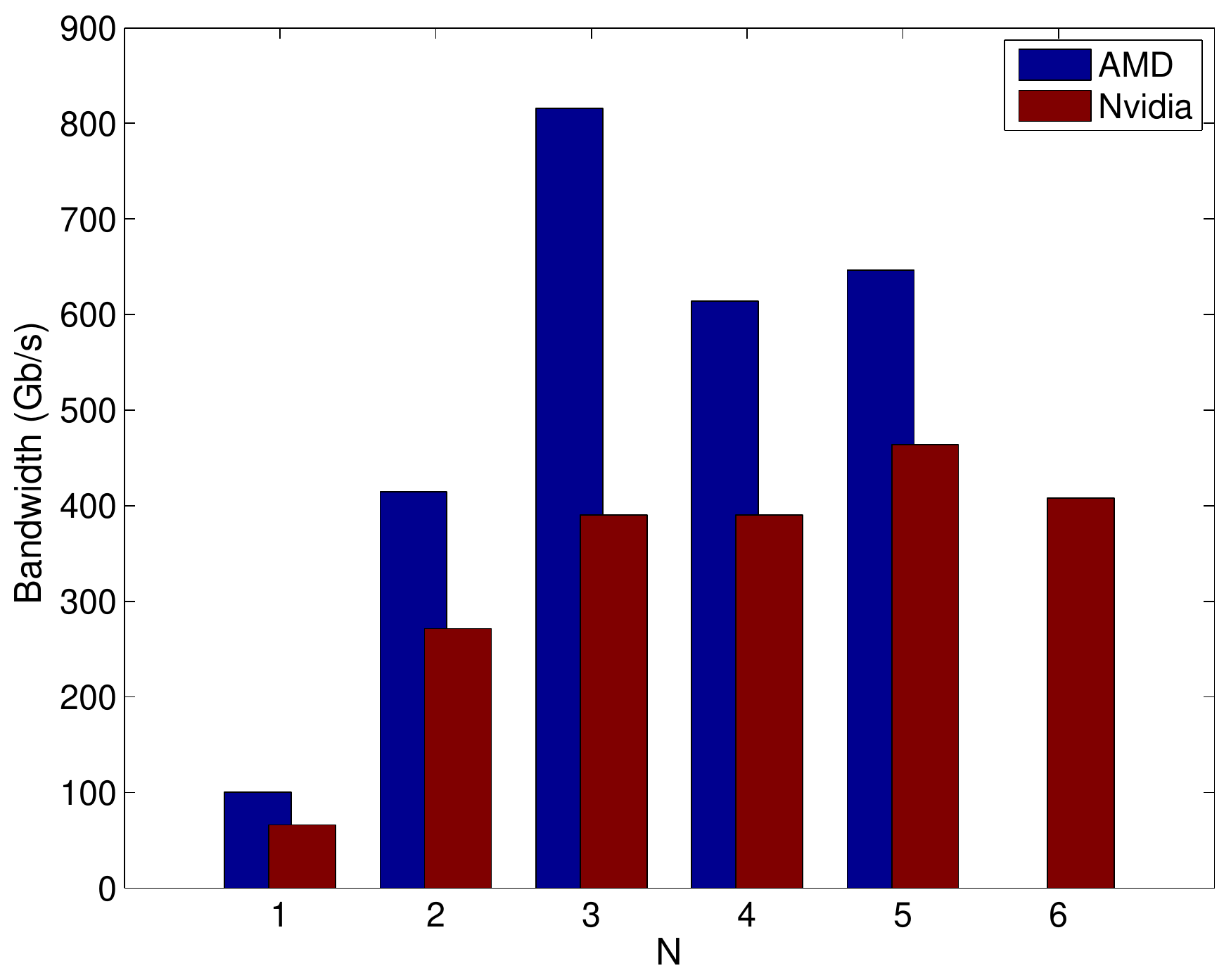}}
\caption{GFLOPS and estimated effective bandwidth for the advection volume kernel under both the AMD and Nvidia setup.}
\label{fig:advec_vol_gflops}
\end{figure}

\begin{figure}
\centering
\subfigure[GFLOPS]{\includegraphics[width=.45\textwidth]{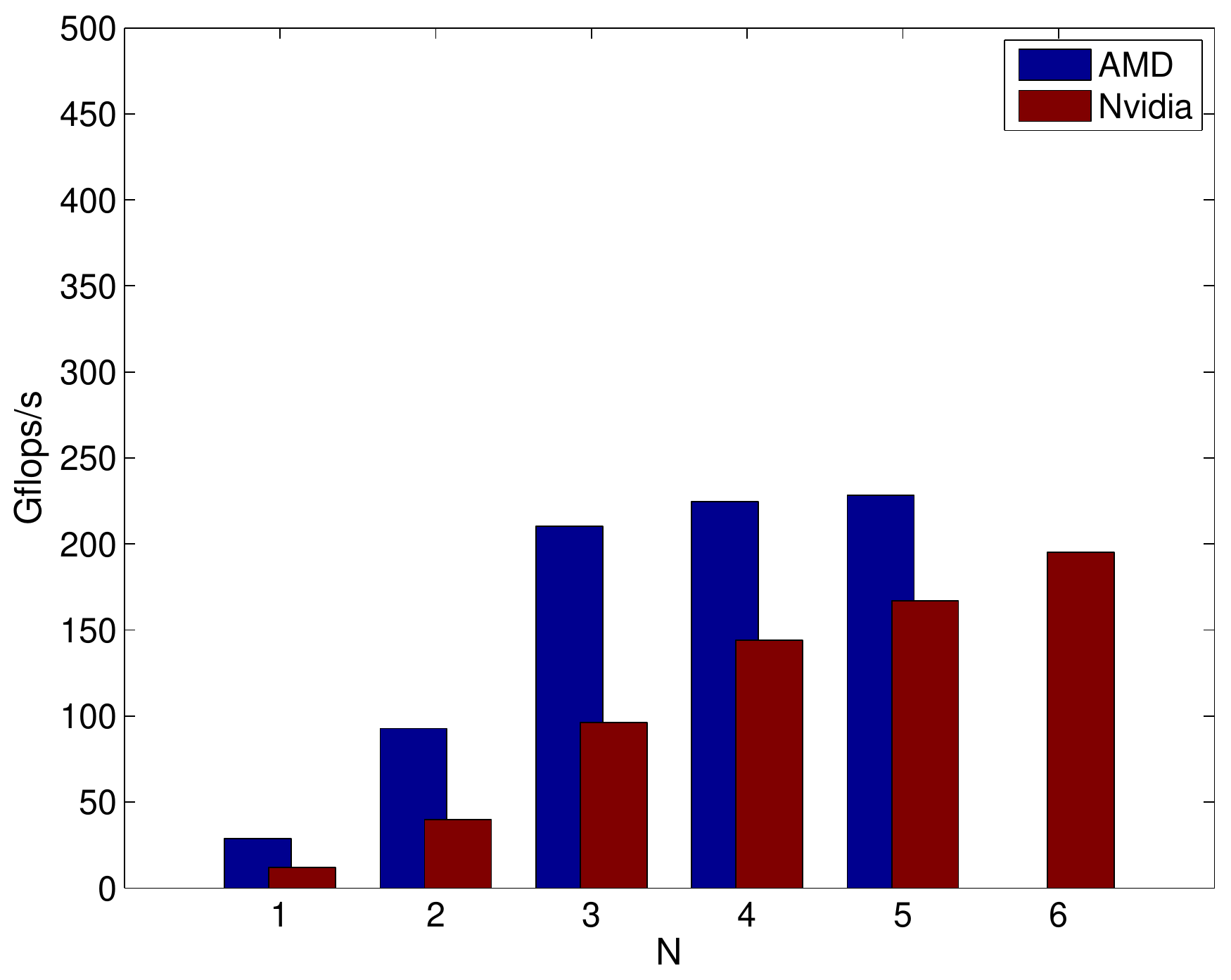}}
\subfigure[Est. bandwidth]{\includegraphics[width=.45\textwidth]{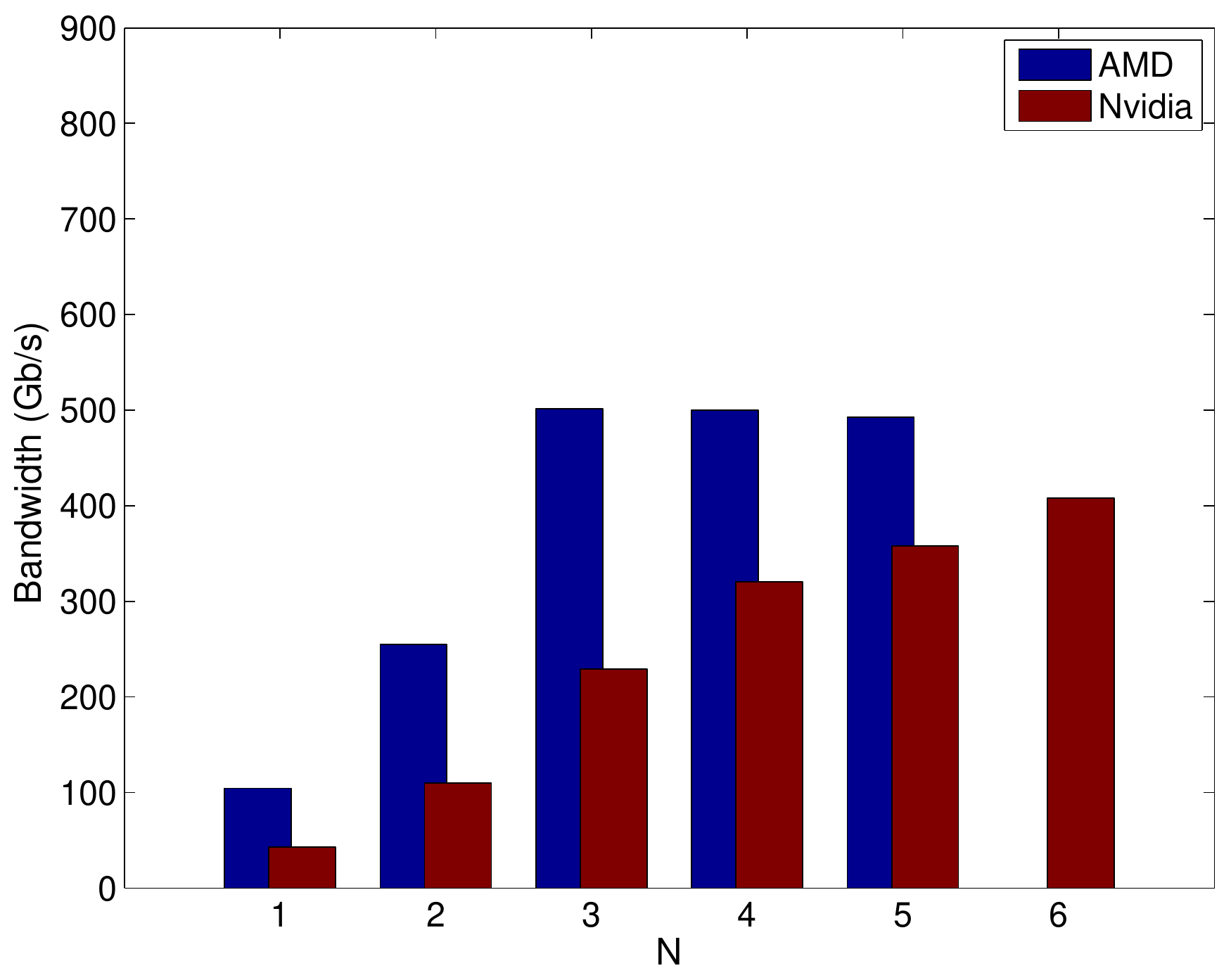}}
\caption{GFLOPS and estimated effective bandwidth for the advection surface kernel under both the AMD and Nvidia setup.}
\label{fig:advec_surf_gflops}
\end{figure}

\begin{figure}
\centering
\subfigure[GFLOPS]{\includegraphics[width=.45\textwidth]{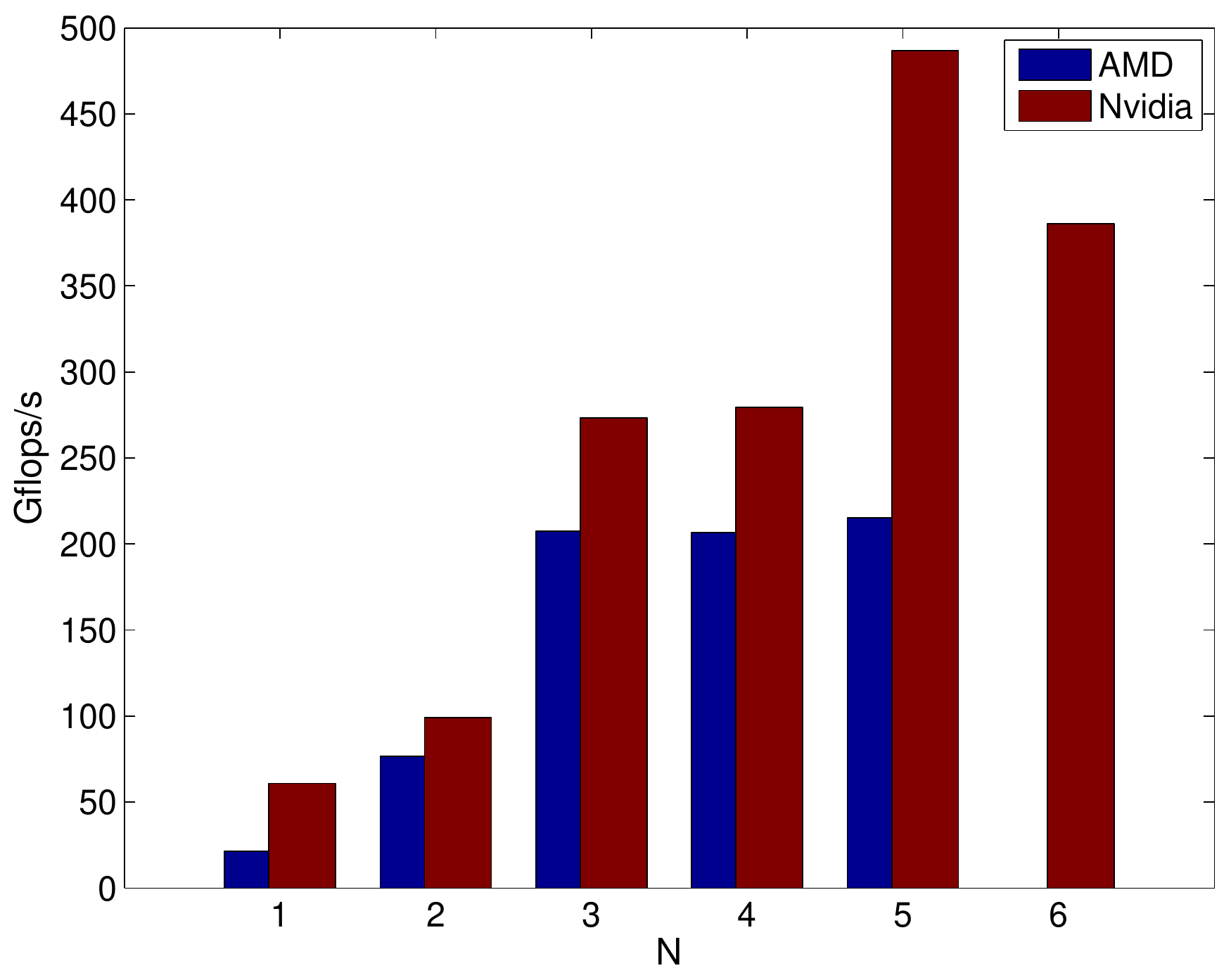}}
\subfigure[Est. bandwidth]{\includegraphics[width=.45\textwidth]{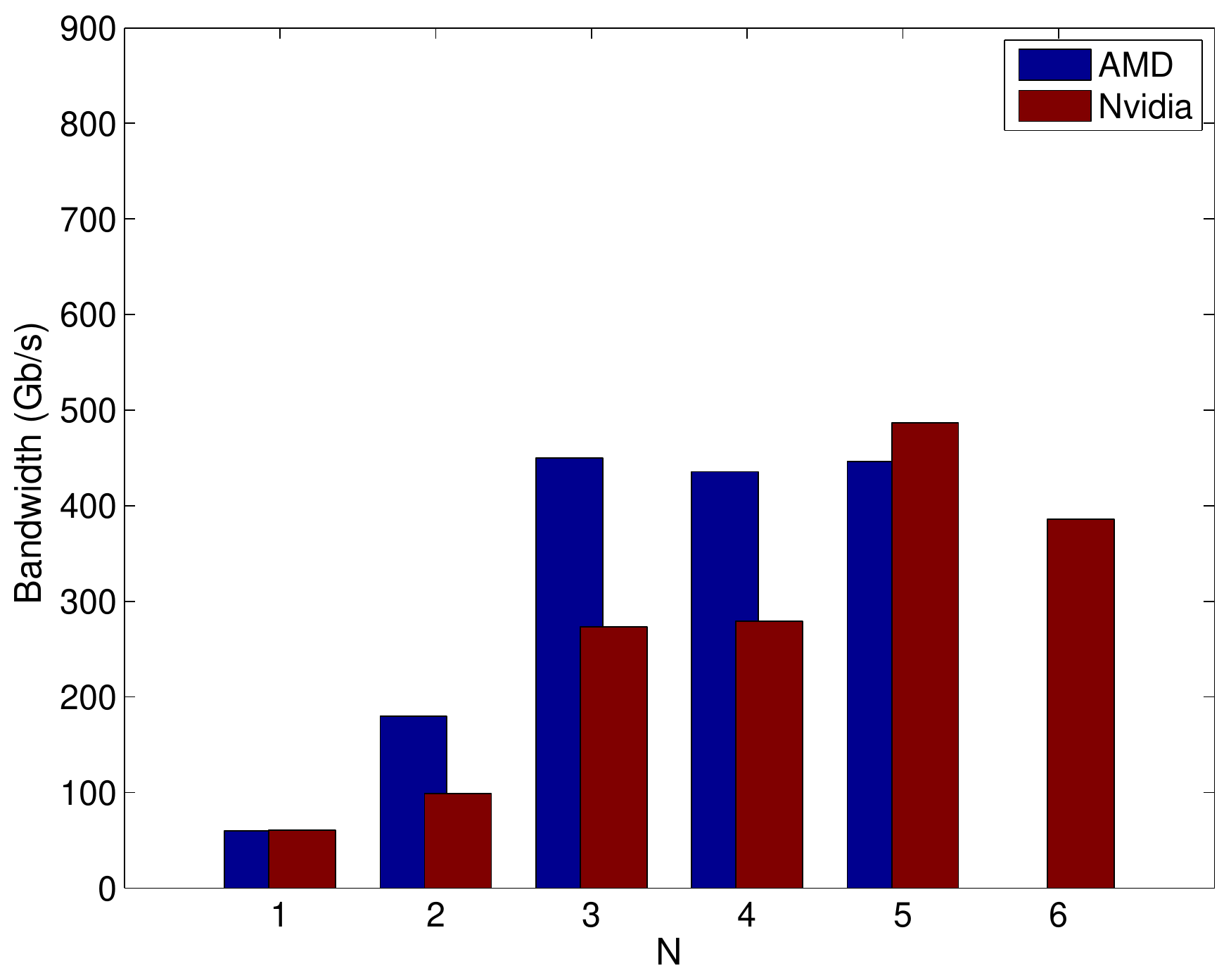}}
\caption{GFLOPS and estimated effective bandwidth for the advection RK update kernel under both the AMD and Nvidia setup.}
\label{fig:advec_rk_gflops}
\end{figure}

We note that the effect of caching is relatively significant in computing estimated bandwidth.  If the bandwidth is estimated without counting operator loads, we get

\begin{figure}
\centering
\subfigure[GFLOPS]{\includegraphics[width=.45\textwidth]{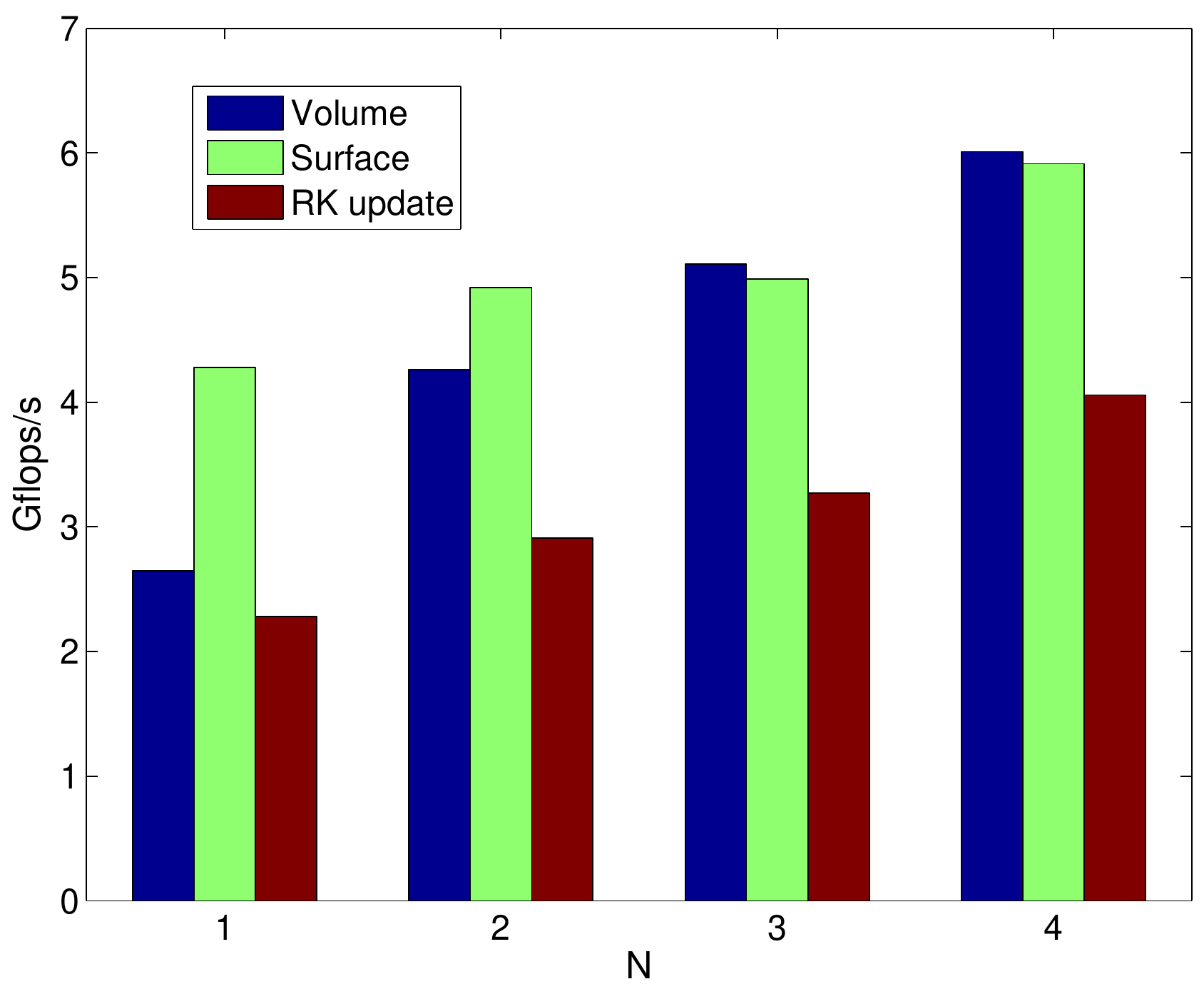}}
\subfigure[Est. bandwidth]{\includegraphics[width=.45\textwidth]{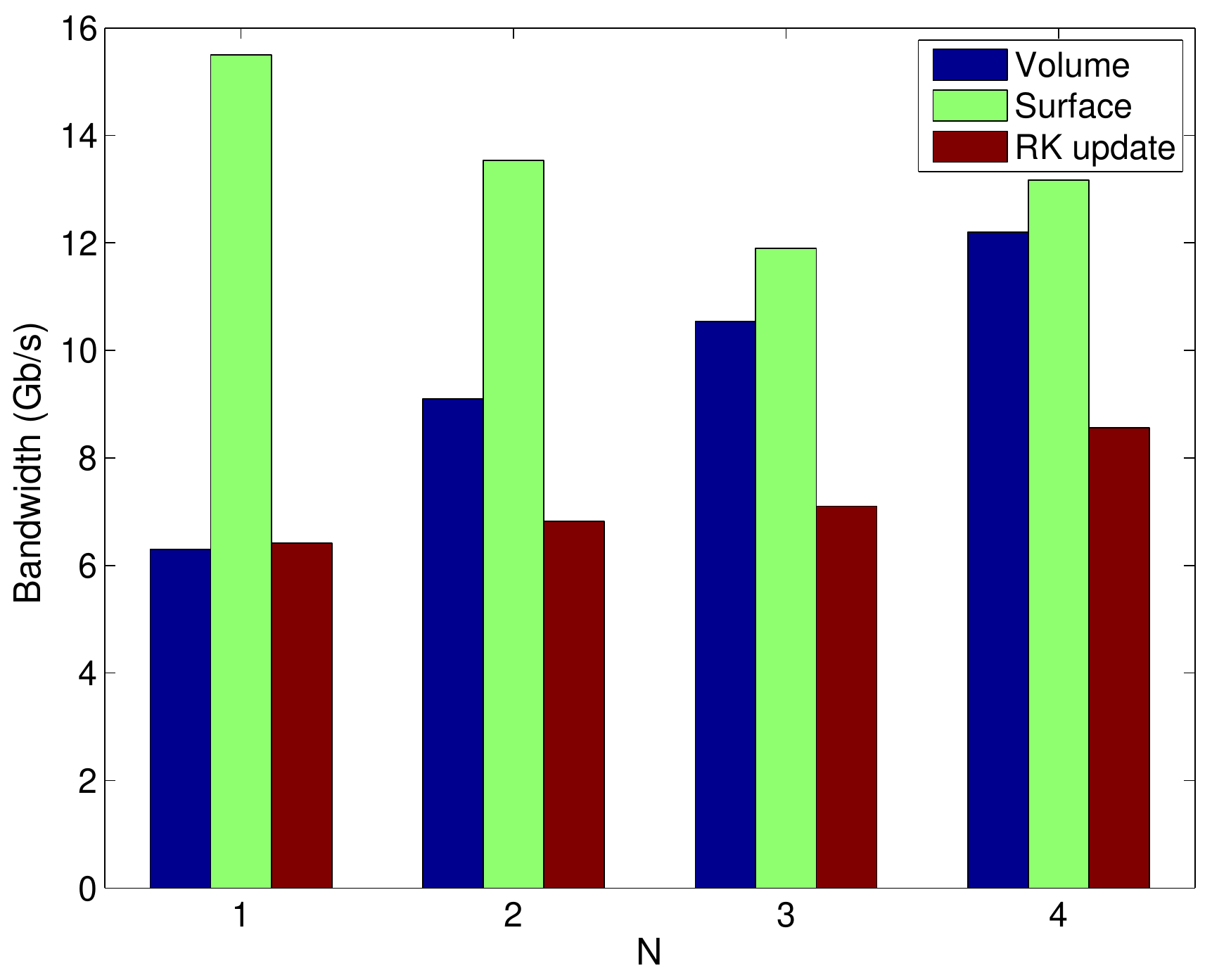}}
\caption{Gflops and estimated effective bandwidth for advection volume, surface, and update kernels under the CPU setup.}
\label{fig:advec_cpu}
\end{figure}

\subsection{Acoustic wave equation}

We consider also the acoustic wave equation on domain $\Omega$ with free surface boundary conditions $p=0$ on $\partial \Omega$.  This may be written in first order form
\begin{align*}
\frac{1}{\kappa}\pd{p}{t}{} + \Div u &= f\\
\rho\pd{\vect{u}}{t}{} + \Grad p &= 0,
\end{align*}
where $p$ is acoustic pressure, $\vect{u}$ is velocity, and $\rho$ and $\kappa$ are density and bulk modulus, respectively, and are assumed to be piecewise constant.  

Let $(p^-,\vect{u}^-)$ denote the solution fields on the face of an element $K$, and let $(p^+,\vect{u}^+)$ denote the solution on the neighboring element adjacent to that face.  Defining the jump of $p$ and the vector velocity $\vect{u}$ componentwise
\[
\jump{\vect{u}} = \vect{u}^+ - \vect{u}^-, \qquad \jump{p} = p^+ - p^-,
\]
the semi-discrete variational formulation for the discontinuous Galerkin method may then be given over an element as
\begin{align*}
\int_K \LRp{\frac{1}{\kappa}\pd{p}{t}{} + \Div \vect{u}} v^- \diff x + \int_{\partial K} \frac{1}{2}\LRp{\vect{n}\cdot \jump{\vect{u}} - \tau_p\jump{p}}v^- \diff x   &= \int_K f v^- \diff x\\
\int_K \LRp{\rho\pd{\vect{u}}{t}{} + \Grad p}v^- + \int_{\partial K} \vect{n}\frac{1}{2}\LRp{\jump{p} - \tau_u \vect{n}\cdot\jump{\vect{u}}}v^- \diff x  &= 0,
\end{align*}
where $\tau_p = 1/\avg{\rho c}$, $\tau_u = \avg{\rho c}$, and $c^2 = \kappa/\rho$ is the speed of sound.  

We discretize by again representing $\vect{u}, v$ using the semi-nodal basis $\phi_{ijk}$.  This converts the variational problem into a system of ODEs
\begin{align*}
\td{p}{t} + \kappa M^{-1}\LRp{\sum_{k=1}^3 S^k u_k +  L^f P_p} &= b,\\
\td{u_k}{t} + \rho M^{-1}\LRp{S^k u_k + L^{f,k} P_{u_k}} &= 0, \quad k = 1,\ldots, 3,
\end{align*}
where $L^f$ is the scalar lift operator, and $L^{f,k}$ is the vector lift operator defined by
\[
L^{f,k}_{ij} = \int_{\partial K} \vect{n}_k \phi_i(x) \phi_j(x) \diff x \approx \sum_{l=1}^{N^f_c}w_l \phi_i(x_l) \phi_j(x_l) 
\]
applied to the penalty terms $P_p, P_{u_i}$, and $S^k$ is the weak derivative matrix defined by
\[
S^k_{ij} = \int_{K} \pd{\phi_j}{x_k} \phi_i(x) \diff x \approx  \sum_{l=0}^{N_c}w_l \pd{\phi_j(x_l)}{x_k} \phi_i(x_l)
\]
for an appropriate $N_c$ point quadrature rule with points $x_l$ and weights $w_l$.

Defining a vector variable $\vect{U} = (p,\vect{u})$, we may write our system of ODEs for the wave equation as
\[
\td{\vect{U}}{t} = A\vect{U}.
\]
The computed spectral radii of the RHS matrix $\rho({\bf A})$ are given in Figure~\ref{fig:wave_CFL} for various mesh sizes $h$ (computed as the ratio of surface area to volume of an element) and a function of the order of approximation $N$.  The spectral radius $\rho({\bf A})$ gives an estimate of the maximum timestep under which an explicit scheme remains stable, and for standard polynomial finite element spaces is proportional to $N^2/h$.  We observe the same behavior numerically for pyramids, and note that the spectral radius shows very good agreement with $2(N+1)(N+3)/3$, which is the $N$-dependent constant in the discrete trace inequality for the pyramid \cite{chan2015hp}.\footnote{Though the meshes used to compute the spectral radii are uniform, randomly perturbing the vertex positions does not change the value of $\rho({\bf A})$ significantly, which was also observed in \cite{bergot2010higher}.}  

\begin{figure}
\centering
\subfigure[$h^{-1}$ vs $\rho({\bf A})$]{\includegraphics[width=.475\textwidth]{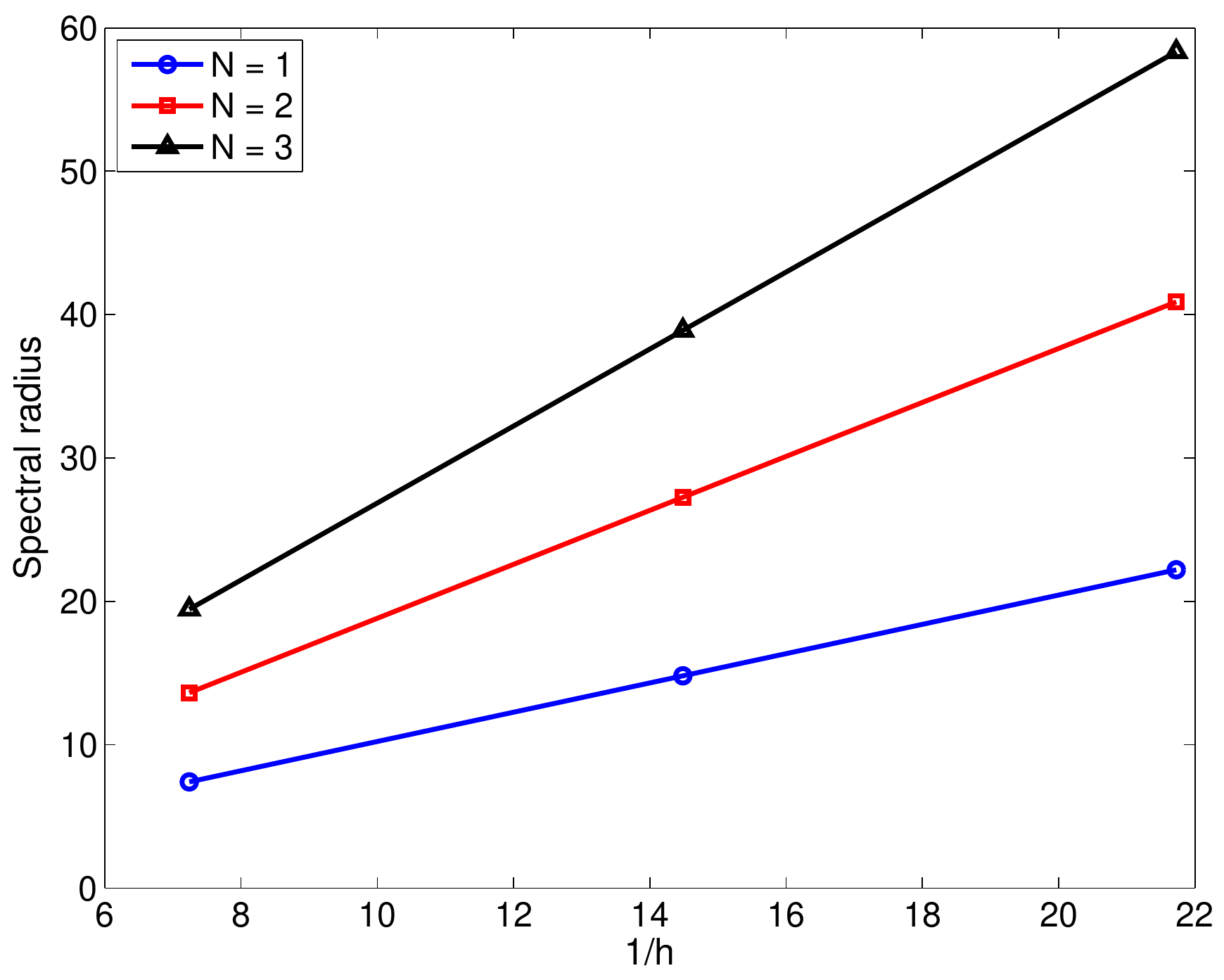}}
\subfigure[$N$ vs $\rho({\bf A})$]{\includegraphics[width=.475\textwidth]{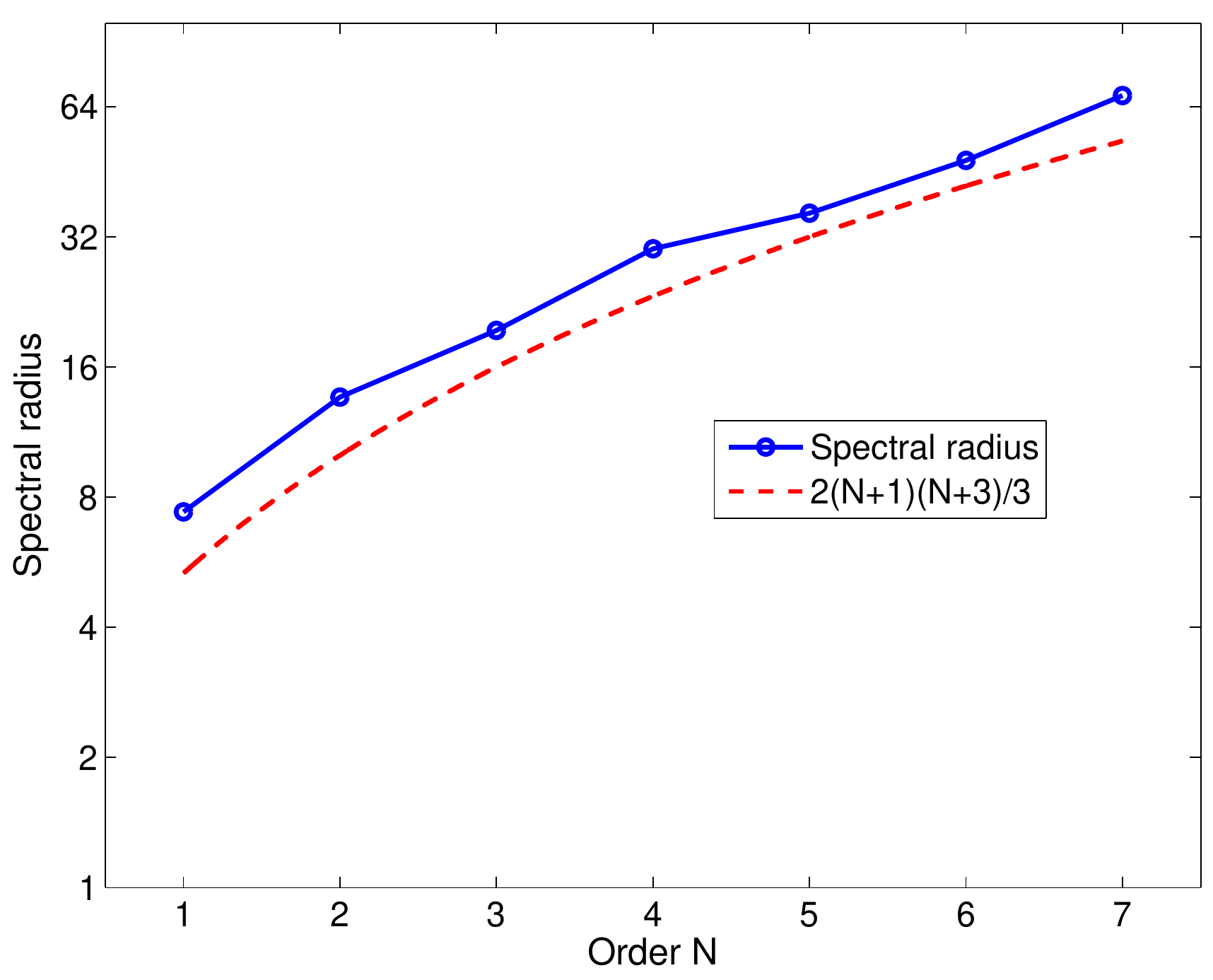}}
\caption{Ratio of numerically computed spectral radii $\rho({\bf A})$, plotted against both mesh size $h$ and $2(N+1)(N+3)/3$.}
\label{fig:wave_CFL}
\end{figure}

We also report numerical convergence rates in Figure~\ref{fig:wave_rates} for the resonant cavity solution 
\[
p(x,y,z,t) = \cos\LRp{{\pi x}/{2}}\cos\LRp{{\pi y}/{2}}\cos\LRp{{\pi z}/{2}}\cos\LRp{{\sqrt{3}\pi t}/{2}}.
\]
over the bi-unit cube $[-1,1]^3$.  Meshes are again constructed by subdividing the cube into $K_{\rm 1D} \times K_{\rm 1D}\times K_{\rm 1D}$ hexahedra, which are then each subdivided into 6 pyramids.  Pyramid vertex positions are perturbed to ensure $J$ is non-constant in each element.  

  It was confirmed in \cite{bergot2010higher} that the rational basis (\ref{eqn:rationalbasis}) achieves optimal $O(N+1)$ rates of convergence for both the $L^2$ and dispersion error.  Since the basis defined by $\phi_{ijk}$ spans the same approximation space as that of (\ref{eqn:rationalbasis}), the numerical errors and convergence rates are also of optimal order, and we observe both optimal rates of convergence in $h$ and exponential convergence in $N$.  The errors are computed in double precision on the GPU; when using single precision, the convergence rates behave similarly, but $L^2$ errors stall at around $10^{-6}$ due to finite precision effects.  

\begin{figure}
\centering
\subfigure[$h$-convergence]{\includegraphics[width=.475\textwidth]{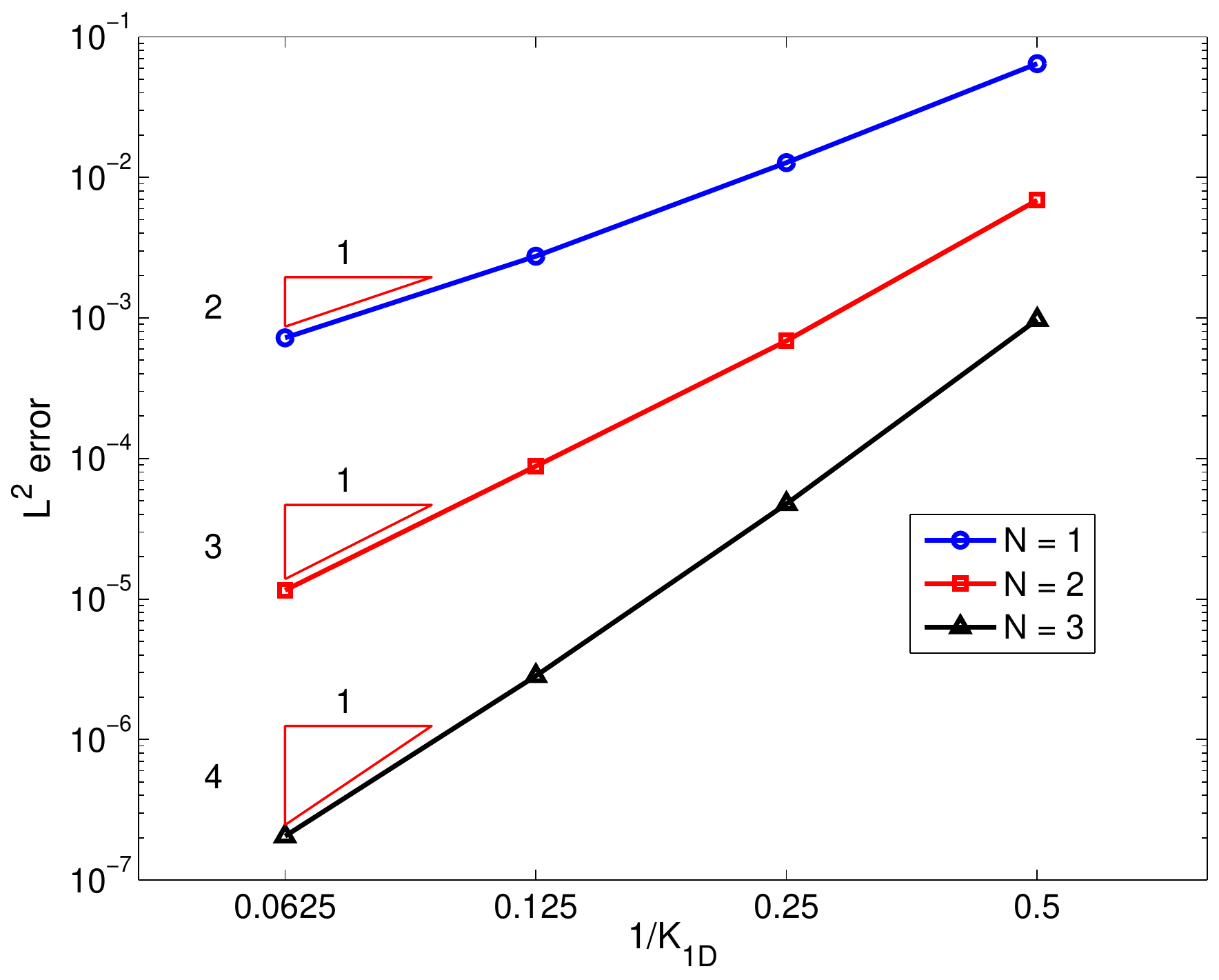}}
\subfigure[$N$-convergence]{\includegraphics[width=.475\textwidth]{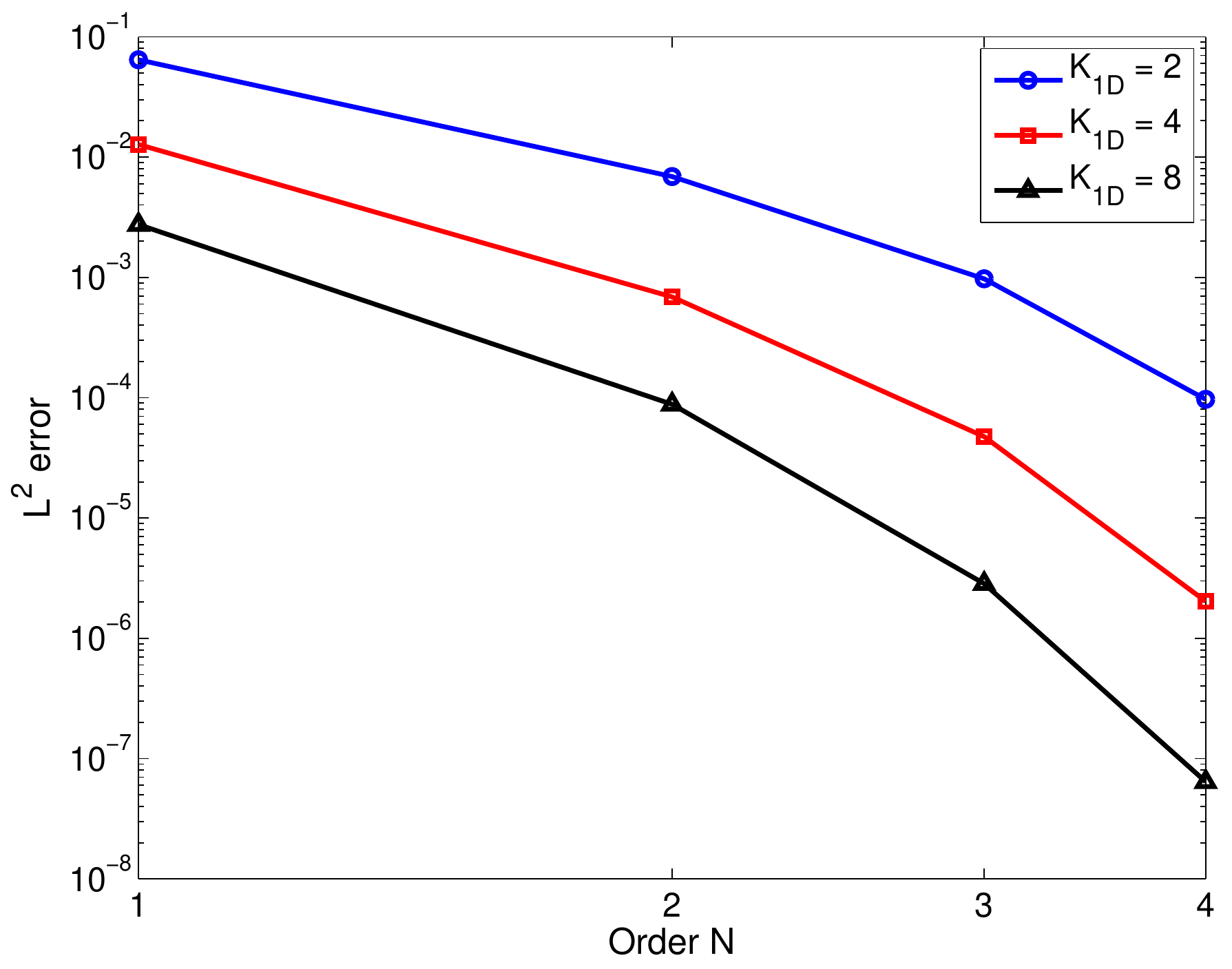}}
\caption{Computed $L^2$ errors for various orders $N$ and mesh sizes, with optimal rates of $h$-convergence for reference.}
\label{fig:wave_rates}
\end{figure}

We present GFLOPS and estimated effective bandwidth for the volume, surface, and RK update kernel in Figures~\ref{fig:wave_vol_gflops}, \ref{fig:wave_surf_gflops}, and \ref{fig:wave_rk_gflops}.  While the estimated effective bandwidth for acoustic wave kernels is similar to that of the advection kernels, the GFLOPS have increased by a factor of 2-4, due to the reuse of derivative and interpolation operators over multiple field variables.  This may be further confirmed by examining the estimated effective bandwidth without counting operator loads, in which case the reported bandwidth decreases by roughly an order of magnitude.  

\begin{figure}
\centering
\subfigure[GFLOPS]{\includegraphics[width=.45\textwidth]{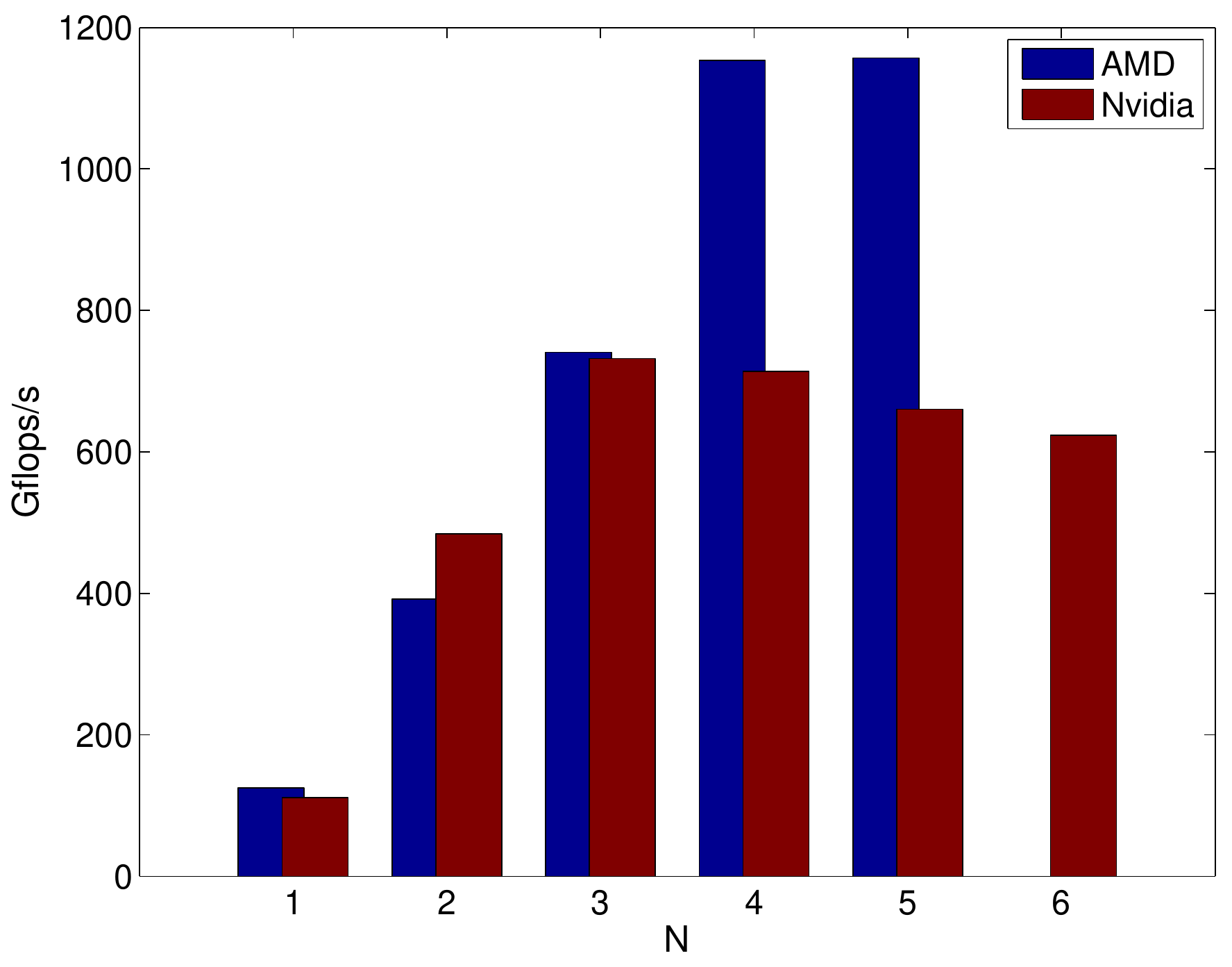}}
\subfigure[Est. bandwidth]{\includegraphics[width=.45\textwidth]{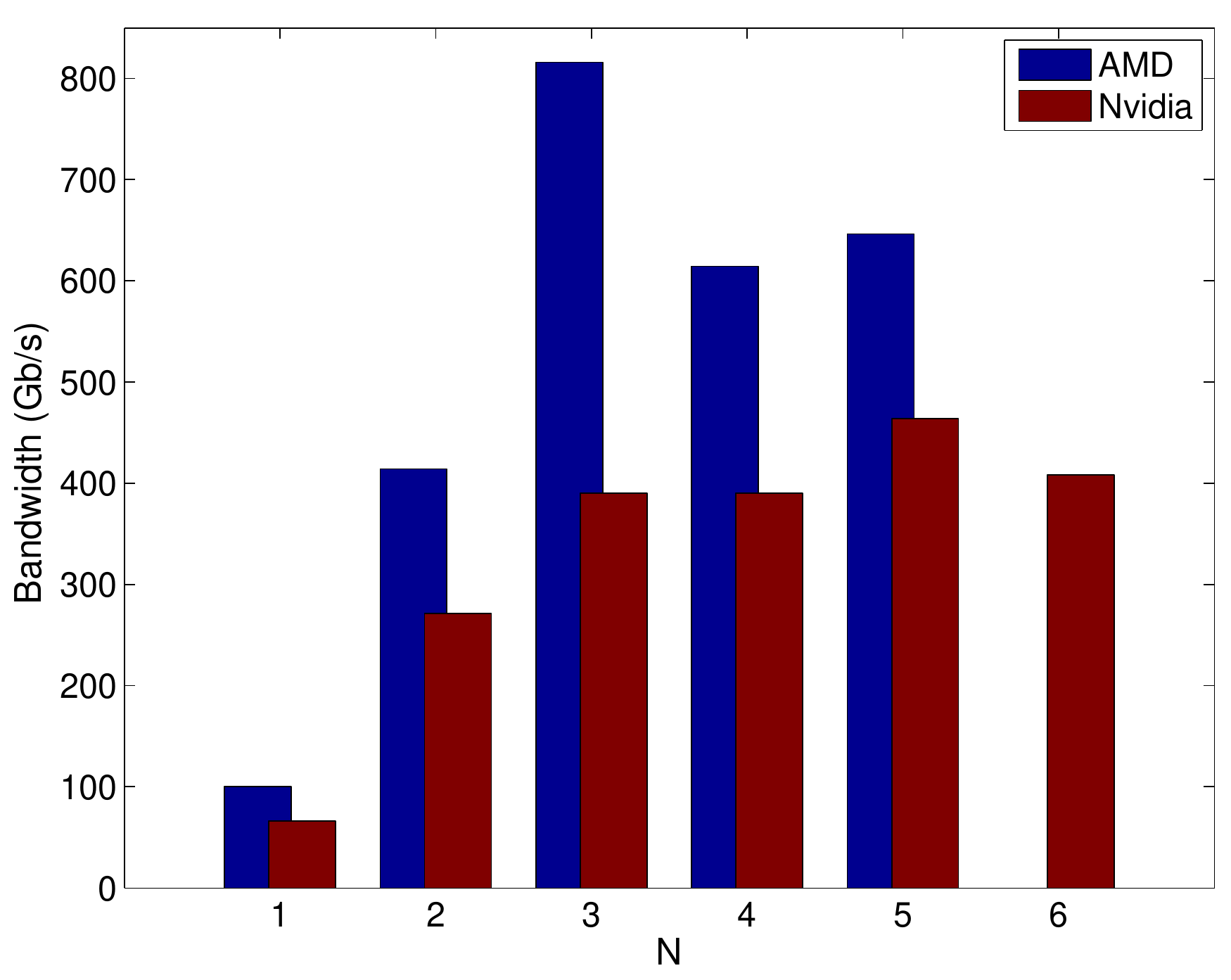}}
\caption{GFLOPS and estimated effective bandwidth for the wave volume kernel under both the AMD and Nvidia setup.}
\label{fig:wave_vol_gflops}
\end{figure}

\begin{figure}
\centering
\subfigure[GFLOPS]{\includegraphics[width=.45\textwidth]{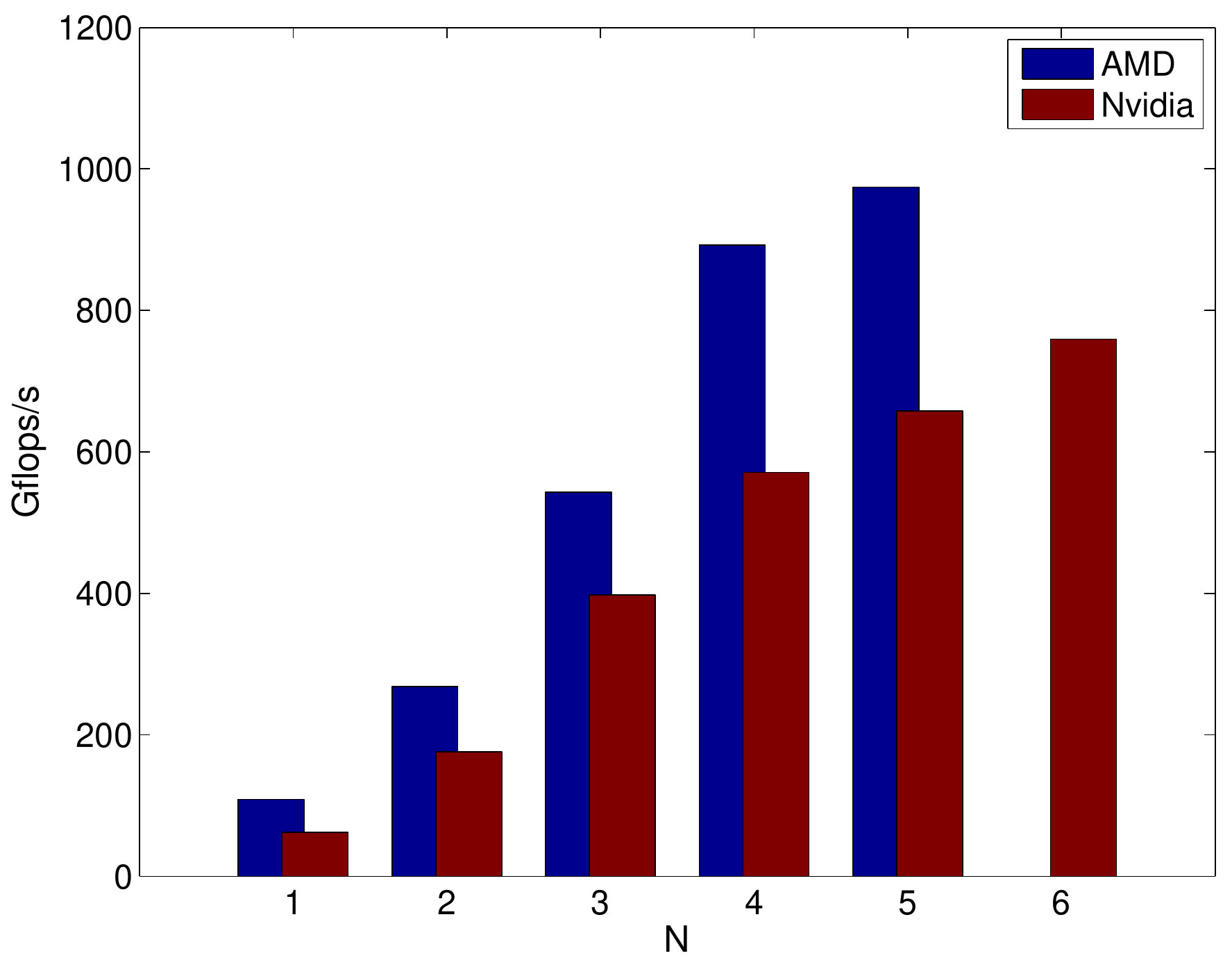}}
\subfigure[Est. bandwidth]{\includegraphics[width=.45\textwidth]{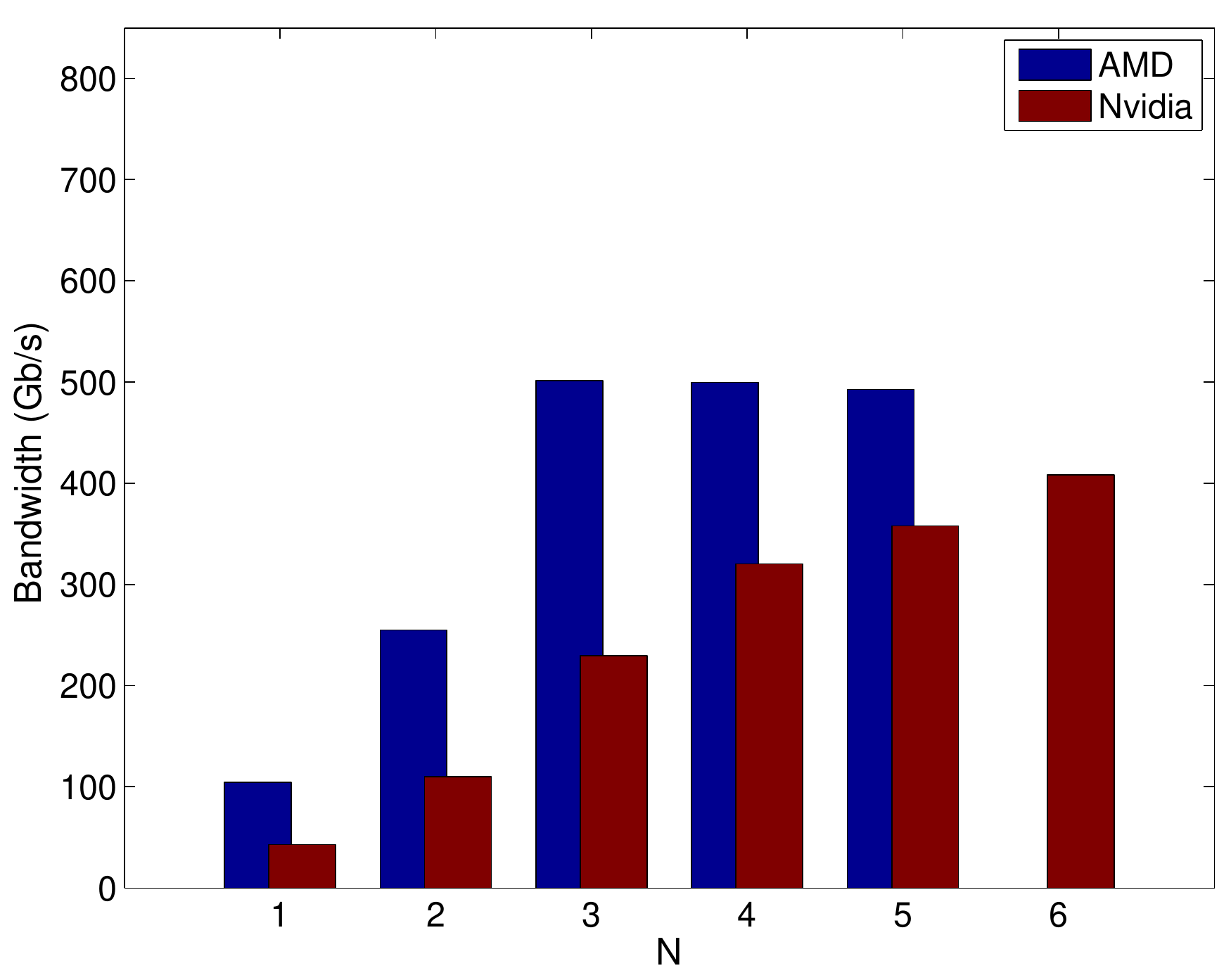}}
\caption{GFLOPS and estimated effective bandwidth for the wave surface kernel under both the AMD and Nvidia setup.}
\label{fig:wave_surf_gflops}
\end{figure}

\begin{figure}
\centering
\subfigure[GFLOPS]{\includegraphics[width=.45\textwidth]{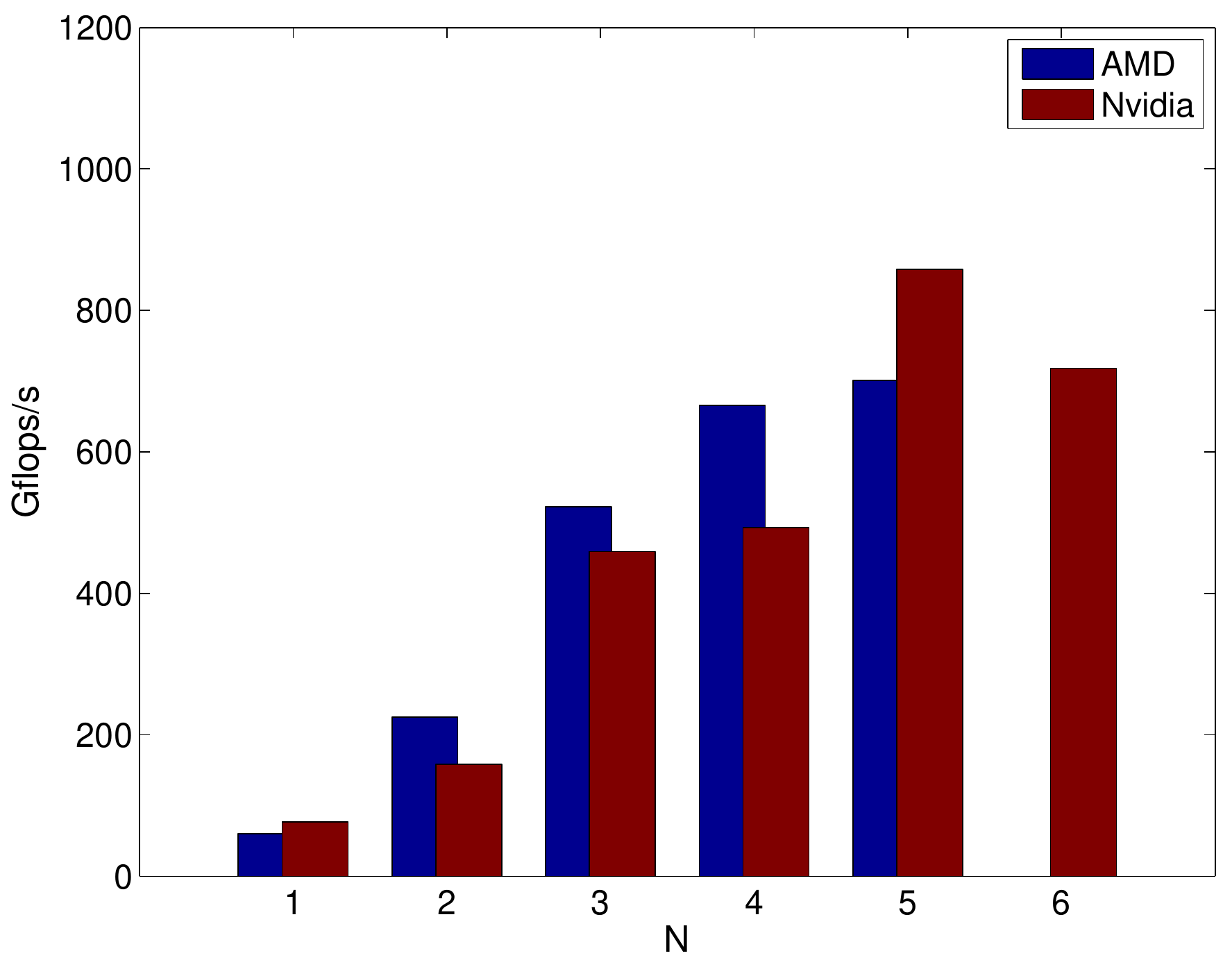}}
\subfigure[Est. bandwidth]{\includegraphics[width=.45\textwidth]{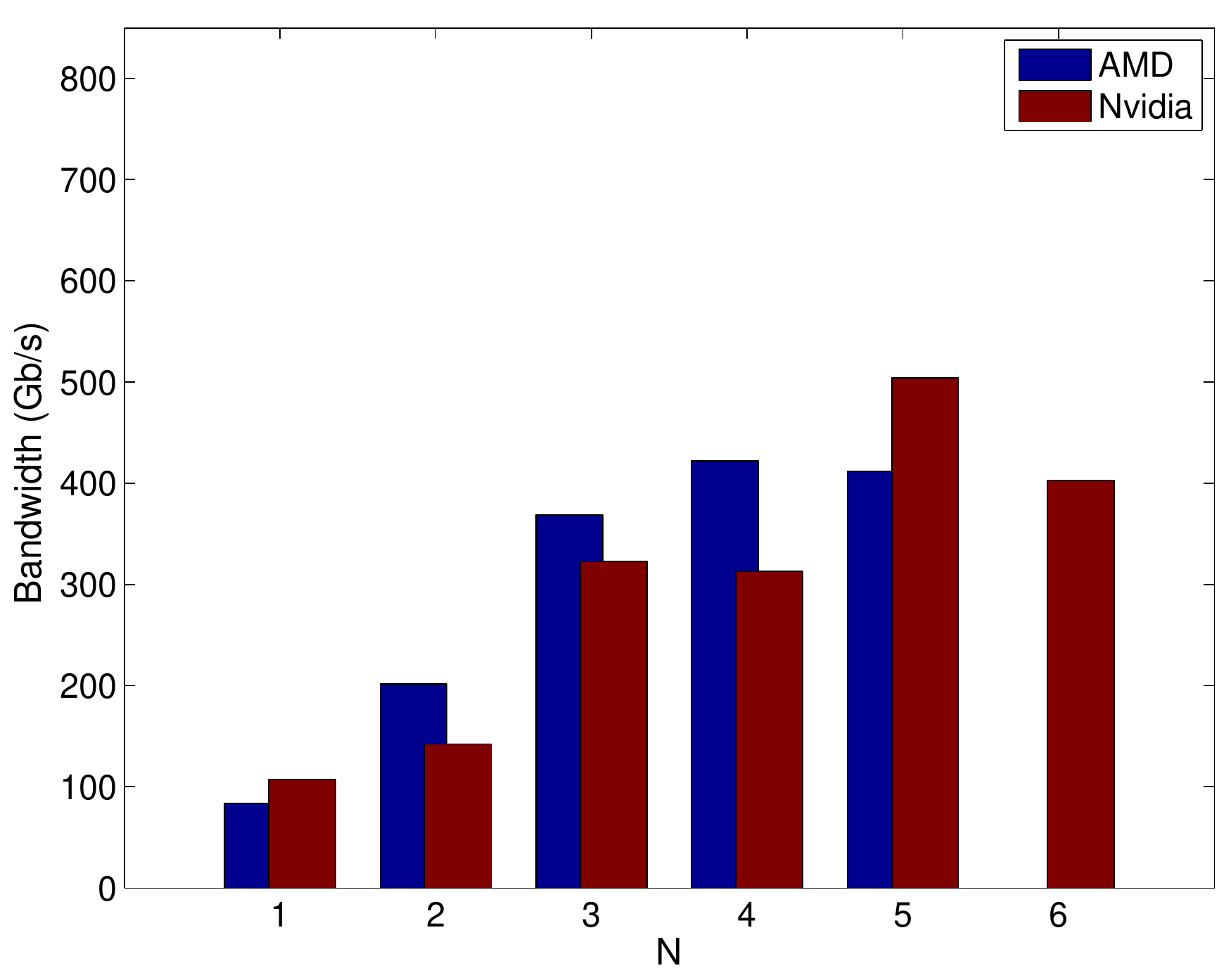}}
\caption{GFLOPS and estimated effective bandwidth for the wave RK update kernel under both the AMD and Nvidia setup.}
\label{fig:wave_rk_gflops}
\end{figure}

Figure~\ref{fig:wave_cpu} shows GFLOPS and estimated effective bandwidth on an Intel Core i7-5960X CPU using OpenMP.  Again, the GFLOPS of acoustic wave kernels increase while the estimated effective bandwidth remains roughly the same as that of the advection kernels, though the increase is not as pronounced as the increase in GFLOPS from advection to the acoustic wave equation on the GPU.  

\begin{figure}
\centering
\subfigure[GFLOPS]{\includegraphics[width=.45\textwidth]{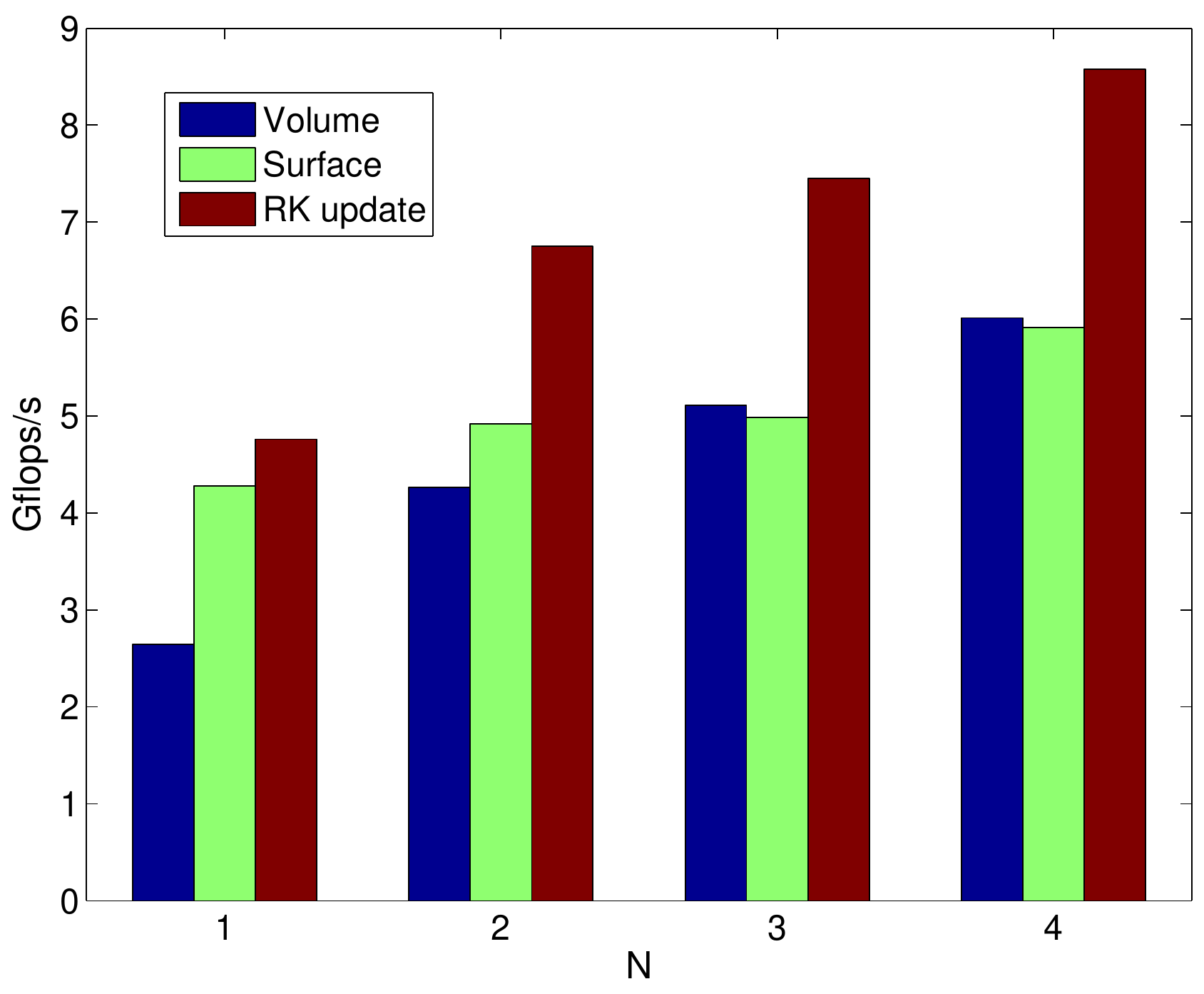}}
\subfigure[Est. bandwidth]{\includegraphics[width=.45\textwidth]{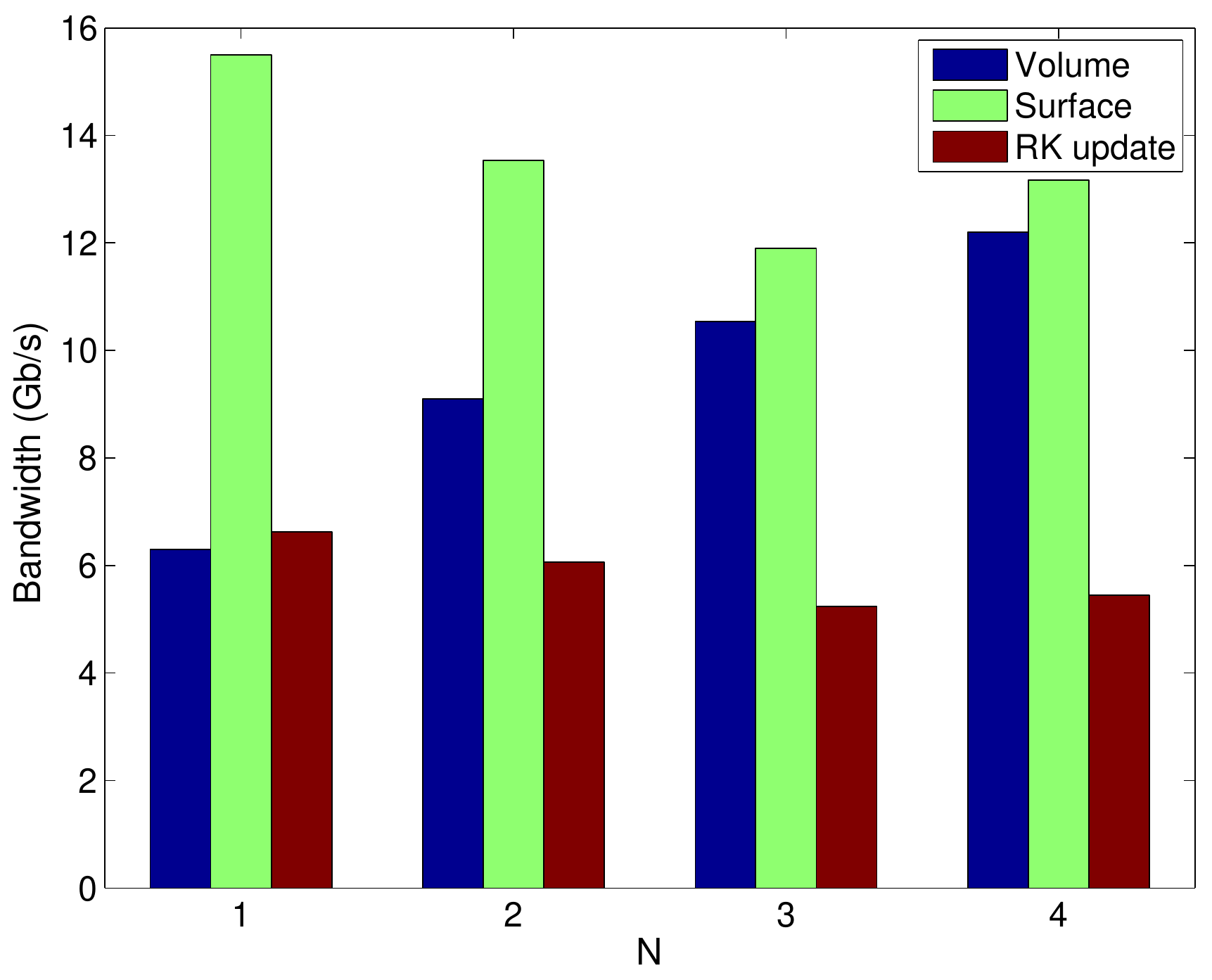}}
\caption{Gflops and estimated effective bandwidth for wave volume, surface, and update kernels under the CPU setup.}
\label{fig:wave_cpu}
\end{figure}

\subsection{Computational improvements}
\label{sec:improvements}

Despite the reported GFLOPS and estimated effective bandwidth reported for pyramids above, it is possible to improve the efficiency of DG on pyramids further.

\subsubsection{Nodal basis functions}

In the above discussions, we discretize by taking the orthogonal (modal) basis $\phi_{ijk}$ defined in Lemma~\ref{lemma:orth}).  However, switching to a nodal discretization using Lagrange basis functions at $N_p$ distinct points on the pyramid requires only a small modification in the application of the mass matrix inverse.  Instead of inverting a diagonal matrix, may be inverted by a change of basis from a nodal to the orthogonal semi-nodal basis.  
It is often desirable to define nodal basis functions at strong interpolation points with respect to the Lebesgue constant, which is present in upper bounds on the interpolation error in the maximum norm.  Additionally, for meshes containing multiple element types, it is convenient to choose nodal points on the triangular faces with $(N+1)(N+2)/2$ points on the triangular faces and $(N+1)^2$ points on the quadrilateral face such that the distribution on those faces matches the distribution on the faces of either hexahedra or tetrahedra.  A survey of various nodal points for the pyramid with both low Lebesgue constant and appropriate nodal distributions on faces is given in \cite{chan2014comparison}.  

If the triangular and quadrilateral faces of all elements in a mesh share the same symmetric nodal distribution, conformity under continuous Galerkin methods may be enforced simply by matching the nodal degrees of freedom on the faces of adjacent elements, and the computation of surface integrals may be simplified.  In particular, for discontinuous Galerkin methods on vertex-mapped pyramids, surface integrals may be computed using only nodal degrees of freedom on a face and face mass matrices.  For triangular faces, the face mass matrices are scalings of the reference nodal face mass matrix, and the quadrilateral face mass matrix may be expressed as the Kronecker product of separable scalings of 1D nodal mass matrices.  As a result, the use of nodal mass matrices is more efficient and requires less memory than the use of quadrature for the computation of surface integrals.  Since the computation of surface integrals is the dominant cost for low order discontinuous Galerkin methods, nodal methods are often observed to be more efficient for moderate values of $N$ \cite{bergot2013higher, hesthaven2007nodal}.  For larger values of $N$, the ratio of interior degrees of freedom to surface degrees of freedom increases, and the cost of computing volume integrals becomes dominant.  

\subsubsection{Volume kernel evaluation}

While the computation of surface integrals may also be performed using mass matrices under a nodal basis, volume integrals must still be computed using quadrature due to the the rational nature of the mapping.  Despite the reported GFLOPS and estimated effective bandwidth, the cost of the volume kernel becomes a limiting factor at high orders.  Figure~\ref{fig:runtime} shows reported runtimes and percentage of total runtimes\footnote{The percentage of total runtimes are averages of the percentage of total runtime for advection kernels and percentage of total runtime for wave kernels.} for volume, surface, and update kernels on an Nvidia GeForce GTX 980 over 100 timesteps; at $N > 2$, the volume kernel becomes the dominating bottleneck due to use of tensor product cubature rules for the pyramid, which results in an $O(N^6)$ cost in applying derivative operators, in contrast to $O(N^4)$ cost of surface cubature and applying interpolation operators in the surface and update kernels, respectively.  
\begin{figure} 
\centering
\subfigure[Kernel runtimes]{\includegraphics[width=.45\textwidth]{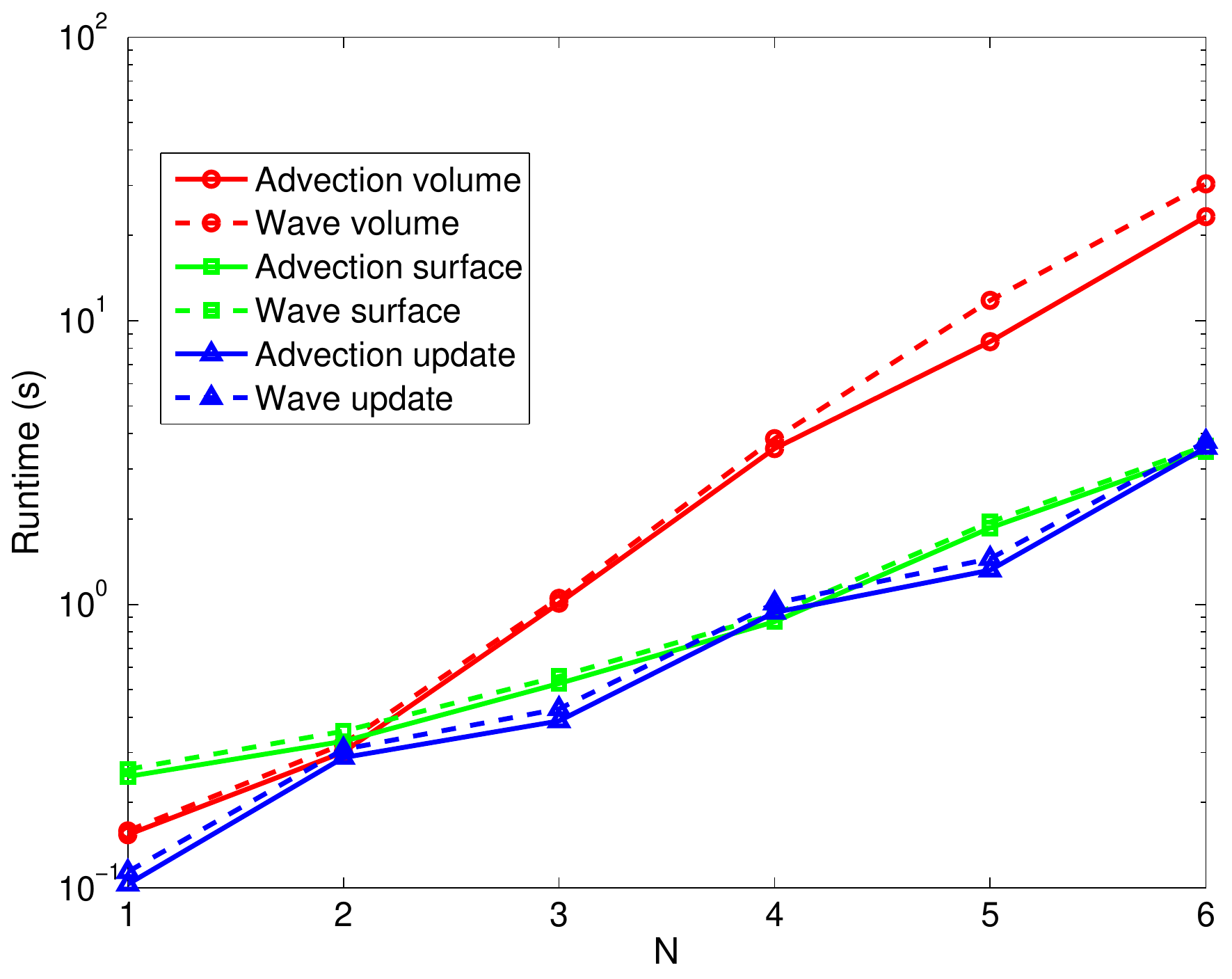}}
\subfigure[Percentage of total runtime]{\includegraphics[width=.45\textwidth]{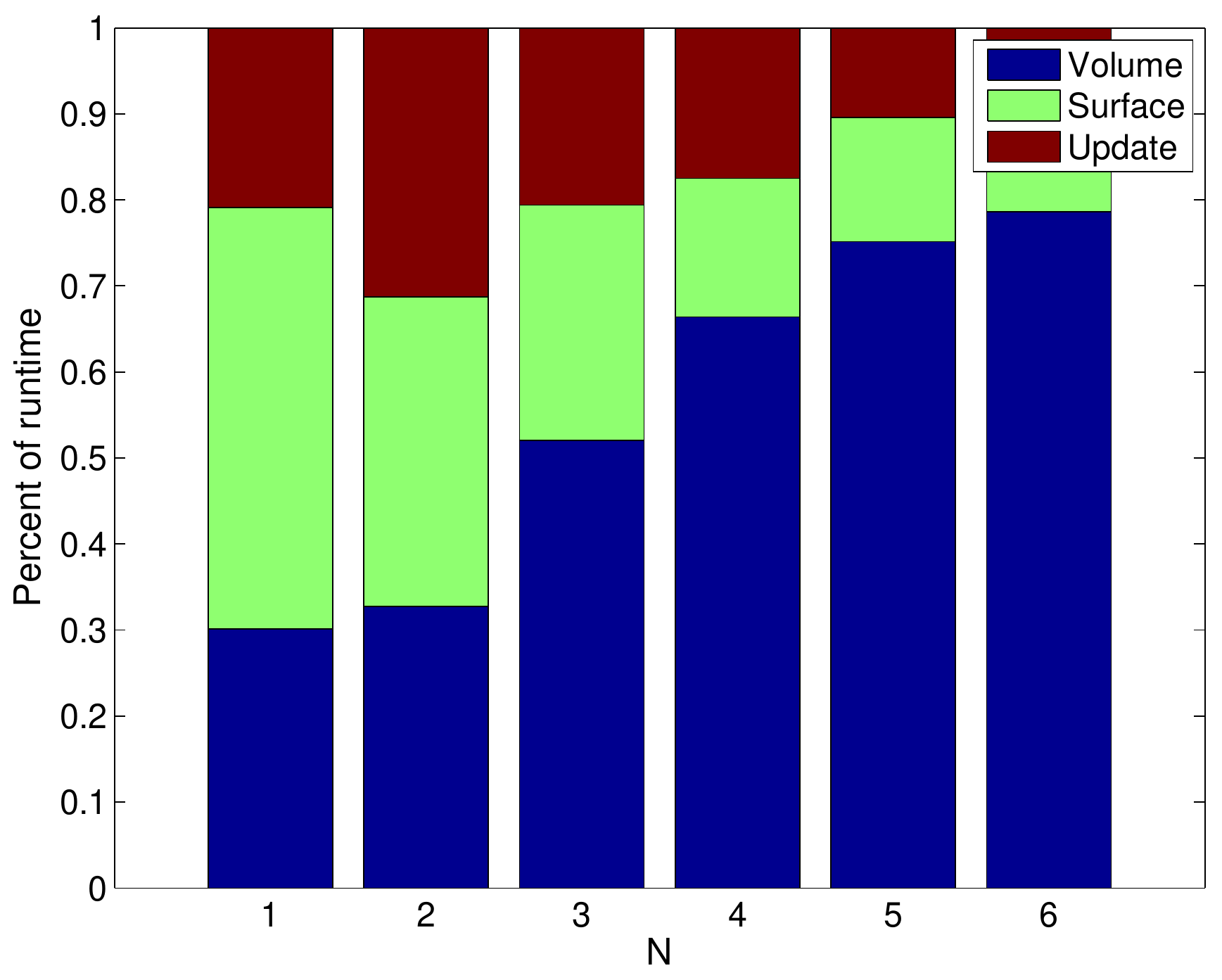}}
\caption{Runtimes and average percentage of total runtime for each individual kernel.}
\label{fig:runtime}
\end{figure}


This cost may be alleviated by exploiting the tensor-product nature of the orthogonal pyramid basis and quadrature rule.  This was done by Bergot, Cohen, and Durufle in \cite{bergot2013higher} to yield lower-cost evaluations of volume integrals for the non-orthogonal pyramid basis.  
It may also be possible to decrease memory costs and improve efficiency by adopting quadrature rules constructed directly on the pyramid instead of mapping quadrature rules from the bi-unit cube to the pyramid, which typically involve a fewer number of points than the $(N+1)^3$-point rules currently used \cite{kubatko2013new, witherden2014identification}.  We hope to explore these options in future work.  

\section{Conclusions and acknowledgements}

We have presented a new higher order basis which is orthogonal on vertex-mapped transformations of the reference pyramid, despite the fact that the transformation is non-affine.  This allows for low-storage implementations of discontinuous Galerkin methods on pyramids, which we hope will aid efficient GPU implementations on hex-dominant meshes.   

The work of the first author (JC) was supported partially by the Rice University CAAM Department Pfieffer Postdoctoral Fellowship.  The second author (TW) was supported partially by ANL (award number 1F-32301, subcontract on DOE DE-AC02-06CH11357).  Both authors would like to acknowledge the support of NSF (award number DMS-1216674) in this research.  The authors would additionally like to thank David Medina and Rajesh Gandham for helpful discussions during the writing of this manuscript.  

\bibliography{pyramids}{}
\bibliographystyle{plain}

\end{document}